\theoremstyle:=definition,remark,plain\do{%
        \expandafter\g@addto@macro\csname th@\theoremstyle\endcsname{%
            \addtolength\thm@preskip\parskip
            }%
        }
\newcommand{\algorithmicbreak}{\textbf{break}}
\newcommand{\BREAK}{\STATE \algorithmicbreak}
\renewcommand*\env@matrix[1][\arraystretch]{%
  \edef\arraystretch{#1}%
  \hskip -\arraycolsep
  \let\@ifnextchar\new@ifnextchar
  \array{*\c@MaxMatrixCols c}}
\newtheorem{lemma}{Lemma}
\newtheorem{definition}{Definition}
\newtheorem{theorem}{Theorem}
\newtheorem{corollary}{Corollary}
\def\uselargeplots{1}
\def\BibTeX{{\rm B\kern-.05em{\sc i\kern-.025em b}\kern-.08em
    T\kern-.1667em\lower.7ex\hbox{E}\kern-.125emX}}
\begin{document}

\title{\LARGE \bf ADMM for Block Circulant Model Predictive Control}

\author{Idris Kempf${^*}$, Paul J.\ Goulart and Stephen Duncan% <-this % stops a space
\thanks{$^*$Corresponding author: {\tt\footnotesize{idris.kempf@eng.ox.ac.uk}}. All authors are with the Department of Engineering Science, University of Oxford, Oxford, UK. This research is supported by the Engineering and Physical Sciences Research Council (EPSRC) with a Diamond CASE studentship. }
%Email: {\tt\footnotesize\{idris.kempf,paul.goulart,stephen.duncan\}\\@eng.ox.ac.uk}}%
}

\maketitle

%%%%%%%%%%%%%%%%%%%%%%%%%%%%%%%%%%%%%%%%%%%%%%%%%%%%%%%%%%%%%%%
% ABSTRACT
%%%%%%%%%%%%%%%%%%%%%%%%%%%%%%%%%%%%%%%%%%%%%%%%%%%%%%%%%%%%%%%
\begin{abstract}
This paper deals with model predictive control problems for large scale dynamical systems with cyclic symmetry. Based on the properties of block circulant matrices, we introduce a complex-valued coordinate transformation that block diagonalizes and truncates the original finite-horizon optimal control problem. Using this coordinate transformation, we develop a modified alternating direction method of multipliers (ADMM) algorithm for general constrained quadratic programs with block circulant blocks. We test our modified algorithm in two different simulated examples and show that our coordinate transformation significantly increases the computation speed.
\end{abstract}

\begin{keywords}
Model Predictive Control (MPC), Alternating Direction of Multipliers Method (ADMM), Block Circulant Systems, Quadratic Program
\end{keywords}

%%%%%%%%%%%%%%%%%%%%%%%%%%%%%%%%%%%%%%%%%%%%%%%%%%%%%%%%%%%%%%%
% SECTION 1: Introduction, Paper Structure, Notation
%%%%%%%%%%%%%%%%%%%%%%%%%%%%%%%%%%%%%%%%%%%%%%%%%%%%%%%%%%%%%%%
\section{Introduction}
The advantages of model predictive control (MPC) for constraint handling and feedforward disturbance modelling are widely recognised. However, its applicability is limited by the requirement to solve optimization problems in real-time to compute the control law. This constraint has inhibited the application of MPC to large-scale and high speed applications. While some approaches for accelerating the computing speed have focused on implementing optimization routines on specialised high-performance hardware \cite{FPGAMPC}, other approaches have exploited the particular symmetric structure encountered in some classes of large-scale problems \cite{SYMMETRICMPC}. In this paper, we address systems with cyclic symmetry resulting in a block circulant structure. These systems can be interpreted as the symmetric interconnection of many subsystems, where each subsystem interacts in an identical way with its neighbors \cite{ANDREADISTRIBUTED}. Circulant systems can be found in a variety of applications, including vehicle formation control \cite{PHDTRAFFIC,CIRCBALLOON}, cross-directional control \cite{PAPERMACHINES}, particle accelerator control \cite{PAROBUST} and in the approximation of partial differential equations \cite{PDE}. The mathematical properties of these systems have already been exploited in controller design \cite{HCIRC,DISTRCIRC}, stability analysis \cite{STABILITYCIRC} and subspace identification \cite{CIRCSYSID}. In this paper, the properties that a constrained quadratic program (CQP) inherits from a block circulant MPC problem are investigated. The main results of the paper show how exploiting the properties of the resulting CQP can reduce the computational cost when it is solved using the alternating direction of multipliers method \cite{ADMMBOYD}.

 This paper is structured as follows. In Section \ref{sec:problemstatement}, the linear model predictive control (MPC) problem and the alternating direction of multipliers method (ADMM) -- an algorithm which is particularly suitable for solving the latter optimization problem -- are introduced. Since this paper is concerned with the analysis of systems with block circulant symmetry, we introduce the notion of block circulant matrices in Section \ref{sec:circulantdecomposition}. Furthermore, the block circulant MPC problem is formally defined and necessary conditions for its decomposition are stated. In Section  \ref{sec:blockcirculantadmm}, we define a CQP with block circulant blocks and show how MPC problems with block circulant data can be written in this form. The block circulant decomposition is then applied to the CQP and a modified ADMM algorithm is then formulated for this problem. In Section \ref{sec:simulations}, we compare the performance of the original and modified ADMM algorithms. For the sake of comparison, both algorithms have been implemented in Matlab and tested on two illustrative examples.

 \textit{Notation and Definitions} The set of real and complex numbers is denoted by $\R$ and $\C$, respectively. Let $\otimes$ denote the Kronecker product and $\oplus$ denote the direct sum (i.e.\ the block diagonal concatenation) of two matrices. Let $\I_n$ represent the identity matrix in $\R^{n\times n}$.  For a scalar, vector or matrix $a$, let $\bar{a}$ denote its complex conjugate; Let $Re(a)$ and $Im(a)$ denote its real and imaginary part, respectively; Let $a^H$ denote its Hermitian transpose. Let $\diag\lbrace a_1,\dots,a_n\rbrace$ denote a diagonal matrix with diagonal elements $a_1,\dots,a_n$.
%%%%%%%%%%%%%%%%%%%%%%%%%%%%%%%%%%%%%%%%%%%%%%%%%%%%%%%%%%%%%%%
% SECTION 2: MPC, ADMM
%%%%%%%%%%%%%%%%%%%%%%%%%%%%%%%%%%%%%%%%%%%%%%%%%%%%%%%%%%%%%%%
\section{Problem Statement}\label{sec:problemstatement}
\subsection{Model Predictive Control}\label{sec:mpc}
Given a discrete-time linear dynamical system and an initial condition $x(t)$ at time $t$, a standard MPC scheme computes a control law by predicting the future evolution of the system and minimizing a quadratic objective function over some planning horizon $T$. This can achieved via repeated solution of the following quadratic program (QP):
\begin{subequations}
\begin{align}
\min &\sum_{k=0}^{T-1} x_k^\Tr Q x_k+u_k^\Tr R u_k + x_N^\Tr P x_N\label{eq:mpcA}\\
s.t.\,\,\, &x_{k+1} = Ax_k+Bu_k, \quad x_0=x(t)\label{eq:mpcB}\\
     &y_k = Cx_k+Du_k\label{eq:mpcC}\\
     &\ubar{y} \leq y_k \leq \bar{y}\label{eq:mpcD}
\end{align}\label{eq:mpc}\end{subequations}
for $k=0,\dots,T-1$ and outputting the first input vector $u_0$ of the optimal control law. The constraints on the states $x_k\in\R^{n_x}$ and the inputs $u_k\in\R^{n_u}$ are lumped into the variable $y_k\in\R^{n_y}$. The prediction horizon is $T$ and the dynamics of the system are described by \eqref{eq:mpcB}. The stability of the state is guaranteed if $P=P^\Tr\succ 0$ is obtained from the discrete-time algebraic Riccati equation (DARE),
\begin{align}\label{eq:dare}
A^\Tr P A - A^\Tr P B \left(B^\Tr P B +R\right)^{-1}B^\Tr P A + Q =P.
\end{align}
The problem \eqref{eq:mpc} has a unique solution if $R\succ 0$, $Q\succeq 0$ and the pairs $(A,B)$ and $(A,Q^{\frac{1}{2}})$ are controllable and observable, respectively \cite[Chapter 12]{MPCBOOK}. Throughout the paper, we will assume that $P$ is obtained from \eqref{eq:dare} and that \eqref{eq:mpc} admits a unique solution.

 By eliminating the state variables $(x_1, \dots, x_N)$ and defining  $z \eqdef (u_0,\dots,u_{T-1})^\Tr$ and $v\eqdef(y_0,\dots,y_{T-1})^\Tr$, \eqref{eq:mpc} can be reformulated as
\begin{subequations}\label{eq:mpcqp}
\begin{align}
\min \,\,\,&\frac{1}{2}z^\Tr J z +q^\Tr z\label{eq:mpcqpA}\\
s.t.\,\,\, &Kz-v=0\label{eq:mpcqpB}\\
     &\ubar{v} \leq v \leq \bar{v},\label{eq:mpcqpC}
\end{align}
\end{subequations}
where we do not make the dependency of $(\bar v, \ubar{v}, q)$ on $x_0$ explicit for simplicity of notation.
The matrices $(J,K)$ and vectors $(\ubar{v},\bar{v},q)$ in \eqref{eq:mpcqp} are defined as
\begin{subequations}
\begin{align}
J &\eqdef G^\Tr \left( (\I_T\otimes Q)\oplus P \right)G+(\I_T\otimes R)\\
K &\eqdef \left[ \I_T\otimes C \,\vert\, 0 \right] G+(\I_T\otimes D)\\
\ubar{v} &\eqdef (\B{1}_T\otimes\ubar{y}) - \left[\I_T\otimes C \,\vert\, 0 \right]Hx_0\\
\bar{v} &\eqdef (\B{1}_T\otimes\bar{y}) - \left[\I_T\otimes C \,\vert\, 0 \right]Hx_0\\
q &\eqdef G^\Tr H x_0\label{eq:mpcqpmatricesE}
\end{align}\label{eq:mpcqpmatrices}\end{subequations}
where $\B{1}_T$ is a vector of ones of length $T$ and $G$ and $H$ arise from elimination of the equality constraints in \eqref{eq:mpcB}, i.e.\ from setting $X=(x_0,\dots,x_T)^\Tr$ and writing \eqref{eq:mpcB} as $X=Gz+Hx_0$,
\begin{align}\label{eq:dynamics}
G =
\begin{bmatrix}
0  & \dots \\
B  &   &   \\
AB & B &   &  \\
\vdots & & \ddots\\
A^{T-1}B & A^{T-2}B & \dots & B
\end{bmatrix},\quad
H=
\begin{bmatrix}
\I_{n_x} \\ A \\ \vdots \\ A^T
\end{bmatrix}.
\end{align}
Note that $J\succ 0$ because $R\succ 0$ by assumption.

\subsection{ADMM Algorithm}\label{sec:admm}
We consider application of the alternating direction of multipliers method (ADMM) to the solution of \eqref{eq:mpcqp}, and will follow the specific ADMM formulation presented in \cite{ADMMBOYD} throughout.   The method is summarized in Algorithm \ref{alg:admm}.
The augmented Lagrangian for \eqref{eq:mpcqp} can be written as
\begin{align}\label{eq:lang}
  \begin{split}
L(z,v,\gamma) = &\frac{1}{2}z^\Tr J z + q^\Tr z+\frac{\rho}{2}\lVert Kz-v\rVert_2^2\\&+\gamma^\Tr(Kz-v) + \mathcal{I}_{[\ubar{v},\bar{v}]}(v),
\end{split}
\end{align}
%&= \frac{1}{2}z^\Tr J z + q^\Tr z +\frac{\rho}{2}\lVert Kz-v +\rho^{-1}\gamma\rVert_2^2 -\frac{\rho^{-1}}{2}\lVert\gamma%\rVert_2^2
where $\mathcal{I}_{[\ubar{v},\bar{v}]}$ is the indicator function for the set $\set{v}{\ubar{v}\le v \le \bar{v}}$ and  the penalty parameter $\rho>0$ and the dual variables $\gamma$ are associated with the constraint \eqref{eq:mpcqpB}. ADMM solves \eqref{eq:mpcqp} by repeatedly minimizing \eqref{eq:lang} w.r.t.\ $z$ and $v$ and updating the dual variables $\gamma$ using an approximate gradient ascent method. Even though the assumptions in section \ref{sec:mpc} guarantee the convergence of Algorithm~\ref{alg:admm}, it is common practice to limit it to a maximum number of iterations $i_{max}$.
 \begin{algorithm}
 \caption{ADMM for MPC}\label{alg:admm}
 \begin{algorithmic}[1]
 \renewcommand{\algorithmicrequire}{\textbf{Input:}}
 \renewcommand{\algorithmicensure}{\textbf{Output:}}
 \REQUIRE State $x(t)$
 \ENSURE  Input $u(t)$ 
  \STATE Set $x_0=x(t)$ and $v^0,\gamma^0=0$; compute $\ubar{v},\bar{v}$ and $q$
  \FOR {$i = 1$ to $i_{max}$}
  	\STATE Update $z^i$ using (SP1)
  	\STATE Update $v^i$ using (SP2)
  	\STATE Update $\gamma^i$ using (SP3)
  	\IF {$\twonorm{v^i-v^{i-1}}^2 < \epsilon$ and $\twonorm{\gamma^i-\gamma^{i-1}}^2 < \epsilon$}
  		\BREAK
  	\ENDIF
  \ENDFOR
 \RETURN $u(t)=(z_1,\dots,z_{n_u})^\Tr$ 
 \end{algorithmic} 
 \end{algorithm}

After initialization\footnote{Note that we assume that ADMM is \emph{cold-started} in Algorithm~\ref{alg:admm} at each time step for simplicity, but in practice one would warm start the variables $(v^0,\gamma^0)$ from a previous solution.}, Algorithm \ref{alg:admm} first minimizes \eqref{eq:lang} w.r.t. to $z$, which, after completing the square, is equivalent to
\begin{align}\label{eq:minz}
z^i =\argmin_z \frac{1}{2}z^\Tr J z + q^\Tr z +\frac{\rho}{2}\lVert Kz-v^{i-1} +\rho^{-1}\gamma^{i-1}\rVert_2^2,
\end{align}
with iteration index $i=1,\dots,i_{max}$. Since \eqref{eq:minz} is an unconstrained QP, its derivative can be set to zero and the resulting linear system can then be solved from
\begin{align}\label{eq:sp1}
\left( J+\rho K^\Tr K \right) z^i = K^\Tr (\rho v^{i-1}-\gamma^{i-1}) -q.\tag{SP1}
\end{align}
The linear system \eqref{eq:sp1} always admits a solution because $J+\rho K^\Tr K\succ 0$ under the assumptions from section \ref{sec:mpc}.

 With $z^i$ obtained, Algorithm \ref{alg:admm} then minimizes \eqref{eq:lang} w.r.t. $v$ by solving
\begin{align}\label{eq:sp2}
v^i = &\argmin_{\ubar{v} \leq v \leq \bar{v}} \twonorm{Kz^i -v +\rho^{-1}\gamma^{i-1}}^2.
\end{align}
The solution to \eqref{eq:sp2} can be written as
\begin{align}\label{eq:sp2_sat}
v^i = \sat_{[\ubar{v},\bar{v}]}\left\lbrace Kz^i+\rho^{-1}\gamma^{i-1}\right\rbrace,\tag{SP2}
\end{align}
where the saturation function limits its argument to $\ubar{v}$ and $\bar{v}$.

Finally, algorithm \ref{alg:admm} updates the dual variable $\gamma$ according to
\begin{align}\label{eq:sp3}
\gamma^i = \gamma^{i-1} +\rho(Kz^i-v^i). \nonumber\tag{SP3}
\end{align}
Subproblems \eqref{eq:sp1} - \eqref{eq:sp2_sat} are repeated until some convergence criterion or the maximum number of iterations is reached. Proofs and other variants of the ADMM can be found in \cite{ADMMBOYD, OSQPMAIN}.

%%%%%%%%%%%%%%%%%%%%%%%%%%%%%%%%%%%%%%%%%%%%%%%%%%%%%%%%%%%%%%%
% SECTION 3: CIRCULANT MATRICES, DECOMPOSITION
%%%%%%%%%%%%%%%%%%%%%%%%%%%%%%%%%%%%%%%%%%%%%%%%%%%%%%%%%%%%%%%
\section{Circulant Decomposition}\label{sec:circulantdecomposition}

\subsection{Preliminaries}\label{sec:circ_matrices}
%%%%%%%%%%%%%%%%%%%%%
\begin{definition}[Circulant Matrices]\label{def:circ_matrices} Let $\inR{\Cn}{n}{n}$ denote the set of real invertible \emph{circulant matrices}, i.e.\ all invertible $n\times n$ matrices in the form
\begin{align}
C = \CircC,
\end{align}
where each row is a cyclic shift of the previous row and each $c_i \in \R$.
\end{definition}
%%%%%%%%%%%%%%%%%%%%%
%%%%%%%%%%%%%%%%%%%%%
 Circulant matrices have a number of very useful basic properties \cite[Chapter 3]{CIRCBOOK}, including
\begin{subequations}
\begin{align}
C\in \Cn, \alpha \in \R &\longleftrightarrow \alpha C \in \Cn,\label{eq:prop1}\\
C\in \Cn &\longleftrightarrow C^T \in \Cn,\label{eq:prop2}\\
C\in \Cn &\longleftrightarrow C^{-1} \in \Cn,\label{eq:prop3}\\
A,C\in \Cn &\longleftrightarrow AC \in \Cn,\label{eq:prop4}\\
A,C\in \Cn &\longleftrightarrow A+C \in \Cn\label{eq:prop5}.
\end{align}\label{eq:propcirc}
\end{subequations}
%%%%%%%%%%%%%%%%%%%%%
\begin{definition}[Fourier Matrix]\label{def:fourier_matrix} Let $\inC{F_n}{n}{n}$ denote the \emph{Fourier matrix}, defined as
\begin{align}
F_n &= \frac{1}{\sqrt{n}}\begin{bmatrix} w_0 & w_1 & \dots & w_{n-1}\end{bmatrix}\label{eq:fourier_matrix}
\end{align}
where the vectors $w_j = \begin{pmatrix}1&\rho_j & \rho_j^2 & \dots & \rho_j^{n-1} \end{pmatrix}^\Tr$ are mutually orthogonal and $\rho_j = e^{i\frac{2\pi}{n}j}$ are complex roots of unity.
\end{definition}
%%%%%%%%%%%%%%%%%%%%%
%%%%%%%%%%%%%%%%%%%%%
 Because the vectors $w_i$ composing the matrix $F_n$ in \eqref{eq:fourier_matrix} are orthogonal and $\Vert w_i \Vert_2 = \sqrt{n}$, the matrix $F_n$ is orthogonal and unitary, i.e. $F_nF_n^H=F_n^HF_n=\I_n$.  The matrix \eqref{eq:fourier_matrix} is called the Fourier matrix since the Fourier coefficients of the discrete Fourier transformation of a vector $x \in \R^n$ can be obtained from the product $F_n x_n$ (or more efficiently using a fast Fourier transformation \cite{ROSE}).

 Perhaps the most remarkable property of circulant matrices is that every circulant matrix of order $n$ is diagonalized by the same Fourier matrix $F_n$.
%%%%%%%%%%%%%%%%%%%%%
\begin{theorem}[Diagonalization of $C\in\Cn$ {\cite[Chapter 3]{CIRCBOOK}}]\label{thm:diag_cn} For $\inR{C}{n}{n}$, it holds that $F_n^HCF_n$ is diagonal iff $C\in\Cn$. The diagonal elements $\lambda_0,\dots,\lambda_{n-1}$ of $F_n^HCF_n$ are
\begin{align}
\lambda_j = c_o + c_1\rho_j+\dots+c_{n-1}\rho_j^{n-1},\,\,j=0,\dots,n-1.
\end{align}
\end{theorem}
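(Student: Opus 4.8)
The plan is to reduce the whole statement to the spectral properties of a single matrix: the cyclic forward-shift (permutation) matrix $\Pi\in\R^{n\times n}$ with ones on the superdiagonal and a one in the bottom-left corner, so that $\Pi^k$ shifts the entries of a vector by $k$ positions cyclically. The first fact I would record is that every circulant $C$ with first row $(c_0,\dots,c_{n-1})$ is the matrix polynomial $C=\sum_{k=0}^{n-1}c_k\Pi^k$, since $\Pi^k$ places $c_k$ exactly on the $k$-th cyclic diagonal; this is immediate from the definition ``each row is a cyclic shift of the previous row.'' Conversely any such polynomial is circulant, which is also consistent with closure of $\Cn$ under sums and products, \eqref{eq:prop4}--\eqref{eq:prop5}.

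Next I would diagonalize $\Pi$ itself. A direct computation using $\rho_j^n=1$ shows that each column $w_j$ of $F_n$ is an eigenvector of $\Pi$ with eigenvalue $\rho_j$: the forward shift sends the entry in position $k$ to position $k-1$, so $\Pi w_j$ has $k$-th entry $\rho_j^{k+1}=\rho_j\rho_j^k$, while the wrapped top entry contributes $\rho_j^n=1=\rho_j\cdot\rho_j^{n-1}$ in the last position; in both cases the result is $\rho_j$ times the corresponding entry of $w_j$. Since $F_n$ is unitary (established after Definition \ref{def:fourier_matrix}), this gives $F_n^H\Pi F_n=\Lambda$, with $\Lambda=\diag(\rho_0,\dots,\rho_{n-1})$.

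With these two facts the \emph{if} direction and the eigenvalue formula follow at once. For $C\in\Cn$ I would substitute the polynomial representation and push $F_n^H(\cdot)F_n$ through the sum, using $F_n^H\Pi^k F_n=\Lambda^k$, to obtain
\[
F_n^H C F_n=\sum_{k=0}^{n-1}c_k\Lambda^k=\diag\!\big(p(\rho_0),\dots,p(\rho_{n-1})\big),\qquad p(x)=\textstyle\sum_{k}c_k x^k,
\]
which is diagonal with entries $\lambda_j=c_0+c_1\rho_j+\dots+c_{n-1}\rho_j^{n-1}$, as claimed.

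For the \emph{only if} direction I would argue from commutation. Writing $D=F_n^H C F_n$ and assuming $D$ diagonal, I get $C=F_nDF_n^H$; since $D$ and $\Lambda$ are both diagonal they commute, and substituting $\Pi=F_n\Lambda F_n^H$ yields $C\Pi=\Pi C$. It then remains to show that commuting with $\Pi$ forces the circulant structure. The cleanest route is to note that the eigenvalues $\rho_0,\dots,\rho_{n-1}$ of $\Pi$ are distinct, so the commutant of $\Pi$ consists exactly of polynomials in $\Pi$, i.e.\ of circulant matrices; alternatively one compares entries directly, since $\Pi^{-1}C\Pi=C$ reads $C_{i,j}=C_{i-1,j-1}$ (indices modulo $n$), which is precisely the statement that $C$ is constant along cyclic diagonals. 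Reality of the entries holds by hypothesis, and the invertibility required for membership in $\Cn$ corresponds to all diagonal entries of $D$ being nonzero. I expect this reverse implication --- specifically, pinning down that the $\Pi$-commutant is no larger than the circulants --- to be the only genuinely non-routine step; everything else is the eigenvector computation and bookkeeping.
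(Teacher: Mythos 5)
Your argument is correct and is the classical proof of this result: the paper itself gives no proof, deferring to \cite[Chapter 3]{CIRCBOOK}, and that reference proceeds exactly as you do --- writing $C=\sum_k c_k\Pi^k$ as a polynomial in the cyclic shift $\Pi$, diagonalizing $\Pi$ by $F_n$, and getting the converse from the fact that the commutant of a matrix with $n$ distinct eigenvalues consists precisely of polynomials in that matrix (equivalently, from your entrywise identity $C_{i,j}=C_{i-1,j-1}$ modulo $n$). The one point worth flagging is that the paper defines $\Cn$ to contain only \emph{invertible} circulants, so the literal ``only if'' direction requires your closing observation that all diagonal entries of $F_n^HCF_n$ be nonzero; otherwise the equivalence should be read over all circulant matrices.
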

%%%%%%%%%%%%%%%%%%%%%
%%%%%%%%%%%%%%%%%%%%%
 From the structure of $F_n$, the following corollary can be established on the structure of the eigenvalues $\lambda_0,\dots,\lambda_{n-1}$: ~

\begin{corollary}[Pattern of Complex Conjugates \cite{CIRCSYSID}]\label{thm:pattern_cn} For $C\in\Cn$ the diagonal elements $\lambda_0,\dots,\lambda_{n-1}$ of $F_n^HCF_n$ have the following pattern of complex conjugates. If $n$ is odd, then $\lambda_0$ is real while $(\lambda_1,\dots,\lambda_{\frac{n-1}{2}}) = (\bar{\lambda}_{n-1},\dots,\bar{\lambda}_{\frac{n+1}{2}})$. If $n$ is even, then $\lambda_0$ and $\lambda_\frac{n}{2}$ are real while $(\lambda_1,\dots,\lambda_{\frac{n}{2}-1})=(\bar{\lambda}_{n-1},\dots,\bar{\lambda}_{\frac{n}{2}+1})$. If $C=C^\Tr$, then $\lambda_0,\dots,\lambda_{n-1}$ are real.
\end{corollary}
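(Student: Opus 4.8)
The plan is to exploit the explicit eigenvalue formula from Theorem~\ref{thm:diag_cn} together with the fact that the coefficients $c_i$ are real. First I would write $\lambda_j = \sum_{k=0}^{n-1} c_k \rho_j^k$ and, since each $c_k\in\R$, take the complex conjugate to obtain $\bar\lambda_j = \sum_{k=0}^{n-1} c_k \bar\rho_j^{\,k}$. The key identity is that conjugating a root of unity is the same as reflecting its index: because $\rho_j = e^{i2\pi j/n}$, we have $\bar\rho_j = e^{-i2\pi j/n} = e^{i2\pi(n-j)/n} = \rho_{n-j}$, where $n-j$ is read modulo $n$. Substituting this into the conjugated sum gives $\bar\lambda_j = \sum_{k} c_k \rho_{n-j}^k = \lambda_{n-j}$. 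This single relation $\bar\lambda_j = \lambda_{n-j}$ (indices mod $n$) is what drives the entire corollary.

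From $\bar\lambda_j = \lambda_{n-j}$, all three assertions follow by bookkeeping on the indices. Setting $j=0$ yields $\bar\lambda_0 = \lambda_0$, so $\lambda_0$ is real. When $n$ is even, setting $j=n/2$ gives $\bar\lambda_{n/2} = \lambda_{n/2}$, so $\lambda_{n/2}$ is real as well. The remaining indices split into conjugate pairs $\{j,n-j\}$: for $j=1,\dots,(n-1)/2$ when $n$ is odd, and for $j=1,\dots,n/2-1$ when $n$ is even, the relation reads $\lambda_j = \bar\lambda_{n-j}$, which is exactly the claimed pattern $(\lambda_1,\dots) = (\bar\lambda_{n-1},\dots)$ once the pairs are listed in order. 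I would verify the two endpoint indices in each parity case explicitly (e.g.\ $\lambda_{(n-1)/2} = \bar\lambda_{(n+1)/2}$ for odd $n$) to make sure the ranges line up.

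For the final claim I would avoid recomputing anything and instead observe that $C = C^\Tr$ is a real symmetric matrix and hence has real eigenvalues; since $F_n$ is unitary, $F_n^H C F_n = \diag\lbrace\lambda_0,\dots,\lambda_{n-1}\rbrace$ exhibits $\lambda_0,\dots,\lambda_{n-1}$ as precisely the eigenvalues of $C$, which are therefore real. Alternatively, symmetry of a circulant forces $c_k = c_{n-k}$, from which $\lambda_j = \lambda_{n-j} = \bar\lambda_j$ follows directly from the identity above. There is no genuine analytical difficulty in this argument; the only step demanding care is the index arithmetic modulo $n$ and keeping the two parity cases straight — in particular correctly identifying $j=0$ (and $j=n/2$ for even $n$) as the self-paired, and hence real, indices, while all other indices pair off into conjugate couples.
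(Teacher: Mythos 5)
Your proof is correct and is precisely the argument the paper leaves implicit by citing \cite{CIRCSYSID}: it derives the key relation $\bar{\lambda}_j=\lambda_{n-j}$ (indices mod $n$) from the eigenvalue formula of Theorem~\ref{thm:diag_cn} and the realness of the $c_k$, and the index bookkeeping for both parities, including the self-paired indices $j=0$ and $j=n/2$, checks out. The final claim via real symmetry of $C$ (or equivalently $c_k=c_{n-k}$) is also sound, so nothing further is needed.
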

 Note that the same pattern of complex conjugates applies to the elements of $F_n x$ for any $x \in \R^n$.
%%%%%%%%%%%%%%%%%%%%%
\begin{definition}[Block Circulant Matrices] Let $\BCnpmC \subseteq \R^{np\times nm}$ denote the set of real block circulant matrices in the form
\begin{align}
B = \CircB
\end{align}
where the scalars $c_i$ from Definition \ref{def:circ_matrices} have been replaced by blocks $\inR{b_i}{p}{m}$.
\end{definition}
%%%%%%%%%%%%%%%%%%%%%
%%%%%%%%%%%%%%%%%%%%%
 Properties analogous to \eqref{eq:propcirc} can be established for block circulant matrices \cite[Chapter 5.6]{CIRCBOOK}, i.e.\
\begin{subequations}
\begin{align}
B\in \BCnpmC, \alpha \in \R &\longleftrightarrow \alpha B \in \BCnpmC,\label{eq:prop1_bc}\\
B\in \BCnpmC &\longleftrightarrow B^T \in \mathcal{BC}(n,m,p),\label{eq:prop2_bc}\\
B\in \mathcal{BC}(n,m,m) &\longleftrightarrow B^{-1} \in \mathcal{BC}(n,m,m),\label{eq:prop3_bc}\\
A,B\in \BCnpmC &\longleftrightarrow A+B \in \BCnpmC,\label{eq:prop4_bc}\\
A\in \BCnpmC,&\,\,\,B \in \mathcal{BC}(n,m,r)\nonumber \\ &\longleftrightarrow AB \in \mathcal{BC}(n,p,r).\label{eq:prop5_bc}
\end{align}\label{eq:propbcirc}\end{subequations}
For \eqref{eq:prop3_bc} it must hold that $\det B>0$. In \cite{CIRCSYSID}, Theorem \ref{thm:diag_cn} was extended to the block circulant case and is described by the following corollary:
%%%%%%%%%%%%%%%%%%%%%
\begin{corollary}[Block Diagonalization of $B\in\BCnpmC$]\label{thm:diag_bc} For $\inR{B}{np}{nm}$, it holds that $(F_n\otimes\I_p)^HB(F_n\otimes\I_m)$ is block diagonal iff $B\in\BCnpmC$. The blocks $\inC{\nu_j}{p}{m}$ of the block diagonalized matrix are obtained as
\begin{align}
\nu_j = b_o + b_1\rho_j+\dots+b_{n-1}\rho_j^{n-1},\,\,j=0,\dots,n-1.\label{eq:nu_j}
\end{align}
The pattern of complex conjugates described in Corollary \ref{thm:pattern_cn} also holds for the blocks $\nu_j$.
\end{corollary}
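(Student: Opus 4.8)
The plan is to reduce the block statement to the scalar Theorem~\ref{thm:diag_cn} by encoding the block circulant structure through the fundamental cyclic shift and then lifting the scalar diagonalization with Kronecker identities. First I would represent any $B \in \BCnpmC$ as $B = \sum_{k=0}^{n-1} \Pi^k \otimes b_k$, where $\Pi \in \R^{n \times n}$ is the fundamental circulant obtained from Definition~\ref{def:circ_matrices} by setting $c_1 = 1$ and all other $c_i = 0$. Because $\Pi \in \Cn$, Theorem~\ref{thm:diag_cn} gives $F_n^H \Pi F_n = \Lambda$ with $\Lambda = \diag(\rho_0, \dots, \rho_{n-1})$, equivalently $\Pi F_n = F_n \Lambda$ and $F_n^H \Pi = \Lambda F_n^H$. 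Writing $D \eqdef (F_n \otimes \I_p)^H B (F_n \otimes \I_m)$ and using the mixed-product rule $(A \otimes B)(C \otimes E) = (AC) \otimes (BE)$ repeatedly, the formula for $\nu_j$ then follows directly: $D = \sum_{k=0}^{n-1} (F_n^H \Pi^k F_n) \otimes b_k = \sum_{k=0}^{n-1} \Lambda^k \otimes b_k$, whose $j$-th diagonal block is $\sum_{k=0}^{n-1} \rho_j^k b_k = \nu_j$, matching \eqref{eq:nu_j}.

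To obtain the equivalence, I would establish the shift-intertwining characterization that $B \in \BCnpmC$ if and only if $(\Pi \otimes \I_p) B = B (\Pi \otimes \I_m)$. One direction is immediate from the Kronecker sum representation, since $(\Pi \otimes \I_p)(\Pi^k \otimes b_k) = \Pi^{k+1} \otimes b_k = (\Pi^k \otimes b_k)(\Pi \otimes \I_m)$; the converse follows by partitioning $B$ into $p \times m$ blocks $B_{ij}$ and reading off from the relation that $B_{ij} = B_{i-1,j-1}$, i.e.\ that $B_{ij}$ depends only on $(j-i) \bmod n$, which is exactly the block circulant form. Next, substituting $B = (F_n \otimes \I_p) D (F_n^H \otimes \I_m)$ and applying $\Pi F_n = F_n \Lambda$ and $F_n^H \Pi = \Lambda F_n^H$, the intertwining relation on $B$ is transformed, after cancelling the invertible Fourier factors, into the equivalent condition $(\Lambda \otimes \I_p) D = D (\Lambda \otimes \I_m)$ on $D$.

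The decisive step is to show that $(\Lambda \otimes \I_p) D = D (\Lambda \otimes \I_m)$ holds if and only if $D$ is block diagonal. Partitioning $D$ into blocks $D_{ij} \in \C^{p \times m}$, the condition reads $\rho_i D_{ij} = \rho_j D_{ij}$, that is $(\rho_i - \rho_j) D_{ij} = 0$ for all $i,j$. Since the roots of unity $\rho_0, \dots, \rho_{n-1}$ are distinct, this forces $D_{ij} = 0$ whenever $i \neq j$, giving block diagonality, while the reverse implication is trivial because the scalar $\Lambda$-factors commute with a block diagonal $D$ blockwise. Chaining the three equivalences, namely $B$ block circulant $\Leftrightarrow$ intertwining on $B$ $\Leftrightarrow$ intertwining on $D$ $\Leftrightarrow$ $D$ block diagonal, proves the stated iff.

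Finally, the pattern of complex conjugates follows from the formula for $\nu_j$: since each $b_k$ is real and $\overline{\rho_j} = \rho_{n-j}$, I have $\overline{\nu_j} = \sum_{k=0}^{n-1} b_k \rho_{n-j}^k = \nu_{n-j}$, while $\nu_0$ (and $\nu_{n/2}$ when $n$ is even) is real, reproducing exactly the pattern of Corollary~\ref{thm:pattern_cn}. I expect the main obstacle to be the bookkeeping in the intertwining step, namely keeping the three Kronecker factors, the orientation of $\Pi$ versus $\Pi^\Tr$, and the two distinct block sizes $p$ and $m$ consistent, rather than any deep difficulty, since the distinctness of the roots of unity does all of the essential work in the final equivalence.
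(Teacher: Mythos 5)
Your proof is correct. Note that the paper itself does not prove this corollary at all --- it is simply imported from the cited reference on circulant subspace identification --- so there is no in-paper argument to compare against; what you have written is a complete, self-contained derivation. Your route (writing $B=\sum_{k}\Pi^k\otimes b_k$, diagonalizing the fundamental shift $\Pi$ via Theorem~\ref{thm:diag_cn}, and using the mixed-product rule to get $D=\sum_k \Lambda^k\otimes b_k$) handles the ``if'' direction and the formula \eqref{eq:nu_j} cleanly, and the shift-intertwining characterization $(\Pi\otimes\I_p)B=B(\Pi\otimes\I_m)$ together with the distinctness of the roots of unity is exactly the right mechanism for the ``only if'' direction, which is the part most expositions gloss over. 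The block-index bookkeeping checks out ($(\Pi\otimes\I_p)B$ shifts block rows, $B(\Pi\otimes\I_m)$ shifts block columns, and equating them gives $B_{i,j}=B_{i+1,j+1}$, i.e.\ dependence on $(j-i)\bmod n$ only), and the conjugate-pattern argument via $\overline{\rho_j}=\rho_{n-j}$ and realness of the $b_k$ reproduces Corollary~\ref{thm:pattern_cn} for the blocks as claimed.
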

%%%%%%%%%%%%%%%%%%%%%
%%%%%%%%%%%%%%%%%%%%%
 Using the shuffle permutation matrix $\Pi_n^m$ from \cite{ROSE}, we can rewrite $F_n\otimes\I_m$ as
\begin{align}\label{eq:PsiPerm}
F_n\otimes\I_m = (\Pi_n^m)^\Tr (I_m \otimes F_n) \Pi_n^m
\end{align}
and see, that given a vector $x \in \R^{nm}$, $(F_n\otimes\I_m) x$ can be computed by permuting the vector, applying a sequence of $m$ fast Fourier transformations and applying the inverse permutation. This entails that $(F_n\otimes\I_m) x$ is of complexity $\co{mn\log n}$ compared to $\co{m^2n^2}$ for general matrix-vector multiplication. Note that the pattern of complex conjugates also holds for the $n$ blocks of the vector $(F_n\otimes\I_m)x$.

\subsection{Block Circulant Decomposition}\label{sec:bcirc_decomp}
We are concerned in particular with dynamic linear systems where the matrices present in \eqref{eq:mpc} are block circulant. More formally, a block circulant MPC problem is defined as follows.
%%%%%%%%%%%%%%%%%%%%%
\begin{definition}[Block Circulant MPC]\label{def:blockcircmpc}
Consider \eqref{eq:mpc} with $x_k\in\R^{nn_x},u_k\in\R^{nn_u}$ and $y_k\in\R^{pnn_y}$, where $p$ is a positive integer, and partition matrices $C$ and $D$ as $C=[C_1,\dots,C_p]^\Tr$ and $D=[D_1,\dots,D_p]^\Tr$, respectively. We say that \eqref{eq:mpc} is a \emph{block circulant MPC problem} of order $n$ if the following conditions holds:
\begin{align}
A,Q &\in \mathcal{BC}(n,n_x,n_x), \qquad &&B \in \mathcal{BC}(n,n_x,n_u),\nonumber\\
R &\in \mathcal{BC}(n,n_u,n_u), \qquad &&C_i \in \mathcal{BC}(n,n_y,n_x),\nonumber\\
D_i &\in \mathcal{BC}(n,n_y,n_u).\nonumber
\end{align}
for $i=1,\dots,p$.
\end{definition}
%%%%%%%%%%%%%%%%%%%%%
%%%%%%%%%%%%%%%%%%%%%
Any model satisfying the above conditions can be interpreted as a periodic interconnection of $n$ identical subsystems with identical constraints and objective function penalties. Introducing the integer $p$ allows for multiple constraint sets, e.g.\ to restrict the state $x_k$ and the input $u_k$ separately.
\begin{theorem}[Block Diagonalization of Block Circulant MPC]\label{thm:blockdiagmpc}
The matrices
\begin{align}
\psi_x &\eqdef  F_n\otimes\I_{n_x},\label{eq:psix}\\ \psi_u &\eqdef F_n\otimes\I_{n_u},\\\psi_y & \eqdef \I_p \otimes (F_n \otimes \I_{n_y}),
\end{align}
decompose the dynamics \eqref{eq:mpcB}-\eqref{eq:mpcC} of a block circulant MPC problem into
\begin{align}\label{eq:decompABCD}
&\psixy^H\ABCDthin\psixu = \begin{bmatrix}
\hat{A} & \hat{B} \\ \hat{C} & \hat{D}
\end{bmatrix},
\end{align}
where
\begin{align*}
\hat{A}&=\diag\lbrace\hat{a}_1,\dots,\hat{a}_n\rbrace,\qquad
\hat{B}=\diag\lbrace\hat{b}_1,\dots,\hat{b}_n\rbrace,\nonumber\\
\hat{C}&=\begin{bmatrix}\diag\lbrace\hat{c}_1^1,\dots,\hat{c}_n^1\rbrace\\ \vdots\\ \diag\lbrace\hat{c}_1^p,\dots,\hat{c}_n^p\rbrace\end{bmatrix},\,\,\,
\hat{D}=\begin{bmatrix}\diag\lbrace\hat{d}_1^1,\dots,\hat{d}_n^1)\\ \vdots\\ \diag\lbrace\hat{d}_1^p,\dots,\hat{d}_n^p\rbrace\end{bmatrix}\nonumber,
\end{align*}
the objective function matrices \eqref{eq:mpcA} into
\begin{align}\label{eq:decompQR}
&\psixu^H\QRthin\psixu = \begin{bmatrix}
\hat{Q} & 0\\
0       & \hat{R}\end{bmatrix},
\end{align}
where
\begin{align*}
\hat{Q}&=\diag\lbrace\hat{q}_1,\dots,\hat{q}_n\rbrace,\qquad
\hat{R}=\diag\lbrace\hat{r}_1,\dots,\hat{r}_n\rbrace,\nonumber
\end{align*}
and the terminal cost matrix $P$ into
\begin{align}\label{eq:decompP}
\hat{P} = \psi_x^H P \psi_x = \diag\lbrace\hat{p}_1,\dots,\hat{p}_n\rbrace.
\end{align}
\end{theorem}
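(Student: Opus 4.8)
The plan is to reduce almost the entire statement to repeated application of Corollary~\ref{thm:diag_bc}, isolating as the only genuinely new work the fact that the terminal matrix $P$ inherits block circulant structure from the DARE. First I would note that $\psi_x$, $\psi_u$ and $\psi_y$ are all unitary, since each is a Kronecker product of the unitary matrices $F_n$ and an identity (and $\psi_y$ is in addition a direct sum of $p$ such factors, which preserves unitarity). Writing $\psixy = \psi_x \oplus \psi_y$ and $\psixu = \psi_x \oplus \psi_u$, the congruences in \eqref{eq:decompABCD}--\eqref{eq:decompP} are therefore honest unitary transformations whose block-diagonal factors act blockwise on the partitioned operands.

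For \eqref{eq:decompABCD} I would use the block-diagonal form of $\psixy$ and $\psixu$ to split the product into the four blocks $\psi_x^H A \psi_x$, $\psi_x^H B \psi_u$, $\psi_y^H C \psi_x$ and $\psi_y^H D \psi_u$. The first two are immediately block diagonal by Corollary~\ref{thm:diag_bc}, with diagonal blocks $\hat a_j$ and $\hat b_j$ obtained from \eqref{eq:nu_j} applied to $A$ and $B$ (the index shift from $0,\dots,n-1$ to $1,\dots,n$ being mere relabelling). For the $C$ and $D$ terms I would invoke the partition $C = [C_1,\dots,C_p]^\Tr$ together with $\psi_y = \I_p \otimes (F_n \otimes \I_{n_y})$, so that $\psi_y^H C \psi_x$ is the vertical stack of the $p$ matrices $(F_n\otimes\I_{n_y})^H C_i (F_n\otimes\I_{n_x})$; each of these is block diagonal by Corollary~\ref{thm:diag_bc}, which yields the stacked form claimed for $\hat C$, and identically for $\hat D$. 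The decomposition \eqref{eq:decompQR} follows the same way: $\psixu^H (Q \oplus R) \psixu$ equals $(\psi_x^H Q \psi_x) \oplus (\psi_u^H R \psi_u)$, and the corollary applies to each summand since $Q,R$ are block circulant.

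The hard part is \eqref{eq:decompP}, because $P$ is not assumed block circulant but is only characterized implicitly as the stabilizing solution of \eqref{eq:dare}, so Corollary~\ref{thm:diag_bc} cannot be applied until block circulant structure has been proved. Here I would use the standard characterization that $M \in \mathcal{BC}(n,p,m)$ if and only if it intertwines the block cyclic shifts, $(S_n\otimes\I_p)\,M = M\,(S_n\otimes\I_m)$, where $S_n$ is the $n\times n$ cyclic permutation. Since $A,B,Q,R$ are block circulant, each commutes with, or intertwines, the shifts $\Sigma_x = S_n\otimes\I_{n_x}$ and $\Sigma_u = S_n\otimes\I_{n_u}$. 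I would then set $\tilde P = \Sigma_x P \Sigma_x^\Tr$ and substitute it into \eqref{eq:dare}; pushing the orthogonal factors $\Sigma_x,\Sigma_u$ through every term via these intertwining relations and $\Sigma_x^\Tr = \Sigma_x^{-1}$ shows that the DARE residual of $\tilde P$ equals $\Sigma_x$ times the residual of $P$ times $\Sigma_x^\Tr$, so $\tilde P$ again solves \eqref{eq:dare}. Moreover $\tilde P \succ 0$, and its associated closed-loop matrix is $\Sigma_x(A-BK)\Sigma_x^\Tr$, which is similar to the one for $P$ and hence equally stable, so $\tilde P$ is also the stabilizing solution. Uniqueness of that solution, guaranteed by the assumptions of Section~\ref{sec:mpc}, then forces $\tilde P = P$, i.e.\ $\Sigma_x P = P \Sigma_x$, whence $P \in \mathcal{BC}(n,n_x,n_x)$ and \eqref{eq:decompP} is one last application of Corollary~\ref{thm:diag_bc}. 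The only delicate bookkeeping in this final step is verifying that the feedback gain $K=(B^\Tr P B+R)^{-1}B^\Tr P A$ and the inverse $(B^\Tr P B + R)^{-1}$ transform correctly under the shift, which is exactly where closure of $\mathcal{BC}$ under products and inverses, \eqref{eq:prop3_bc}--\eqref{eq:prop5_bc}, is needed.
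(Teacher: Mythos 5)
Your proof is correct. For \eqref{eq:decompABCD} and \eqref{eq:decompQR} you do essentially what the paper does: these are read off from Corollary~\ref{thm:diag_bc} using the block structure of $\psi_x\oplus\psi_y$ and $\psi_x\oplus\psi_u$ and the partition $C=[C_1,\dots,C_p]^\Tr$, so there is nothing to compare there. For \eqref{eq:decompP} you take a genuinely different route. The paper (Lemma~\ref{thm:dare}) conjugates the DARE itself by $\psi_x$, inserting $\psi_x\psi_x^H=\I$ and $\psi_u\psi_u^H=\I$ to show that $\hat{P}=\psi_x^H P\psi_x$ solves the DARE written in the hatted (block-diagonal) data; since that decomposed DARE splits into $n$ independent equations in the $n_x\times n_x$ blocks, its solution is block diagonal, and $\hat{P}$ must coincide with it. You instead stay in the original coordinates: using the characterization of $\mathcal{BC}(n,n_x,n_x)$ as the matrices intertwining the block cyclic shifts, you show that $\Sigma_x P\Sigma_x^\Tr$ is again a stabilizing solution of \eqref{eq:dare} and invoke uniqueness of the stabilizing solution to conclude that $P$ commutes with the shift, hence $P\in\mathcal{BC}(n,n_x,n_x)$, after which \eqref{eq:decompP} is one more application of Corollary~\ref{thm:diag_bc}. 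The two arguments are logically equivalent --- both ultimately rest on uniqueness of the DARE solution, which the paper leaves implicit in the step ``it can be solved for each of the blocks independently'' and which you make explicit via the orthogonal similarity of the closed-loop matrices. Your version yields the slightly stronger structural statement that $P$ itself is block circulant (the paper only recovers this through the ``iff'' in Corollary~\ref{thm:diag_bc}), while the paper's version has the practical advantage of directly exhibiting the $n$ decoupled modal DAREs that one would actually solve to compute $\hat{p}_1,\dots,\hat{p}_n$.
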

\begin{proof}
According to Definition \ref{def:blockcircmpc}, \eqref{eq:decompABCD} and \eqref{eq:decompQR} are a direct consequence of Corollary \ref{thm:diag_bc}. The condition \eqref{eq:decompP} is proven through Lemma \ref{thm:dare}.
\end{proof}

\begin{lemma}[Decomposition of the Terminal Cost Matrix]\label{thm:dare}
The terminal cost matrix $P$ solves \eqref{eq:dare} if and only if $\hat{P} = \psi_x^H P \psi_x$ solves \eqref{eq:dare} with $A=\hat{A}$, $B=\hat{B}$, $Q=\hat{Q}$ and $R=\hat{R}$ for the decomposed system. Moreover, $\hat{P} = \diag\lbrace\hat{p}_1,\dots,\hat{p}_n\rbrace$.
\end{lemma}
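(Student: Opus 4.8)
The plan is to exploit the unitarity of the transformation matrices to perform a similarity transformation of the DARE \eqref{eq:dare}, and then to invoke uniqueness of its solution to deduce the block-diagonal structure. First I would record that $\psi_x$ and $\psi_u$ are unitary: since $F_n$ is unitary (as noted after Definition~\ref{def:fourier_matrix}) and the Kronecker product of unitary matrices is unitary, we have $\psi_x\psi_x^H=\I_{nn_x}$ and $\psi_u\psi_u^H=\I_{nn_u}$. The key algebraic facts are then $\hat A=\psi_x^H A\psi_x$, $\hat B=\psi_x^H B\psi_u$, $\hat Q=\psi_x^H Q\psi_x$, $\hat R=\psi_u^H R\psi_u$ and $\hat P=\psi_x^H P\psi_x$, together with the observation that, because $A$ is real, $\psi_x^H A^\Tr\psi_x=(\psi_x^H A\psi_x)^H=\hat A^H$, and similarly $\psi_u^H B^\Tr\psi_x=\hat B^H$. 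Thus the real transpose in \eqref{eq:dare} becomes a Hermitian transpose after the complex change of coordinates, which is how \eqref{eq:dare} is to be read for the decomposed (complex) system.

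With these identities in hand, I would multiply \eqref{eq:dare} on the left by $\psi_x^H$ and on the right by $\psi_x$ and insert the resolutions of the identity $\psi_x\psi_x^H=\I$ and $\psi_u\psi_u^H=\I$ between consecutive factors. Grouping terms, the quadratic term maps as $\psi_x^H A^\Tr P A\psi_x=\hat A^H\hat P\hat A$, the cost term as $\psi_x^H Q\psi_x=\hat Q$, and the right-hand side as $\hat P$. For the cross term, the only point requiring care is the inverse, for which unitarity gives $\psi_u^H(B^\Tr PB+R)^{-1}\psi_u=(\psi_u^H(B^\Tr PB+R)\psi_u)^{-1}=(\hat B^H\hat P\hat B+\hat R)^{-1}$, so the whole term becomes $\hat A^H\hat P\hat B(\hat B^H\hat P\hat B+\hat R)^{-1}\hat B^H\hat P\hat A$. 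Collecting these yields exactly \eqref{eq:dare} in the hatted quantities. Since the transformation is unitary and hence invertible, every step is reversible, which establishes the claimed ``if and only if''.

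For the final claim, I would use that $\hat A,\hat B,\hat Q,\hat R$ are block diagonal by \eqref{eq:decompABCD}--\eqref{eq:decompQR}. Consequently the transformed DARE decouples into $n$ independent Riccati equations, the $j$-th of which reads $\hat a_j^H\hat p_j\hat a_j-\hat a_j^H\hat p_j\hat b_j(\hat b_j^H\hat p_j\hat b_j+\hat r_j)^{-1}\hat b_j^H\hat p_j\hat a_j+\hat q_j=\hat p_j$. Taking $\hat p_j$ to be the solution of each decoupled equation and assembling $\hat P=\diag\{\hat p_1,\dots,\hat p_n\}$ gives a solution of the full transformed DARE, because sums, products and inverses of block-diagonal matrices remain block diagonal. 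The standing assumptions of Section~\ref{sec:mpc} guarantee that \eqref{eq:dare} admits a unique solution, so this block-diagonal matrix must coincide with $\hat P=\psi_x^H P\psi_x$, proving $\hat P=\diag\{\hat p_1,\dots,\hat p_n\}$.

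The main obstacle is bookkeeping rather than conceptual: one must insert $\psi_x\psi_x^H$ versus $\psi_u\psi_u^H$ at precisely the right places, as dictated by the non-square dimensions of $B$, and keep track of the transpose-to-Hermitian conversion throughout. The only genuinely non-computational step is the appeal to uniqueness in the last paragraph, which is what upgrades ``$\hat P$ solves a block-diagonal DARE'' to ``$\hat P$ is itself block diagonal''.
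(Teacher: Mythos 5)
Your proposal is correct and follows essentially the same route as the paper: conjugate the DARE by the unitary transformations $\psi_x,\psi_u$, insert resolutions of the identity to convert every factor into its hatted counterpart, and then use the block-diagonal structure of $\hat A,\hat B,\hat Q,\hat R$ to decouple the transformed equation into $n$ independent Riccati equations. Your explicit treatment of the inverse term, the transpose-to-Hermitian conversion, and the appeal to uniqueness to conclude $\hat P=\diag\{\hat p_1,\dots,\hat p_n\}$ only make more precise steps the paper leaves implicit.
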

\begin{proof}
Multiplying \eqref{eq:dare} with $\psi_x^H$ from the left and $\psi_x$ from the right, inserting $\psi_x^H\psi_x=\I_{nn_x}$ and $\psi_u^H\psi_u=\I_{nn_u}$ where appropriate and substituting \eqref{eq:decompABCD}-\eqref{eq:decompQR} yields
\begin{align*}
\hat{A}^H \psi^H_x P \psi_x \hat{A} - &\hat{A}^H \psi_x^H P \psi_x \hat{B} \left(\hat{B}^H \psi_x^H P \psi_x \hat{B}+\hat{R}\right)^{-1}&\nonumber\\
&\hat{B}^H \psi_x^H P \psi_x \hat{A} + \hat{Q} =\psi_x^H P \psi_x,
\end{align*}
where the properties of block circulant matrices \eqref{eq:prop1_bc}-\eqref{eq:prop5_bc} have been used. The transformation $\hat{P}=\psi^H P \psi_x$ therefore solves \eqref{eq:dare} for the decomposed system. Starting with
\begin{align*}\label{eq:decompdare}
\hat{A}^H \hat{P} \hat{A} - \hat{A}^H \hat{P} \hat{B} \left(\hat{B}^H \hat{P} \hat{B}+\hat{R}\right)^{-1}\hat{B}^H \hat{P} \hat{A} + \hat{Q} =\hat{P}
\end{align*}
and reversing substitutions \eqref{eq:decompABCD}-\eqref{eq:decompQR} yields $P=\psi_x \hat{P} \psi_x^H$. Since all matrices in \eqref{eq:decompdare} are block diagonal with $n$ blocks of size $m$, it can be solved for each for the blocks independently. It follows that $\hat{P}=\diag\lbrace\hat{p}_1,\dots,\hat{p}_n\rbrace$.
\end{proof}

 Theorem \ref{thm:blockdiagmpc} states that the periodic interconnection of $n$ identical subsystems can be decomposed into $n$ independent systems, often referred to as modal subsystems. Corollary \ref{thm:diag_bc} is then applied to the decomposed block circulant MPC problem.
\begin{corollary}[Truncation of Block Circulant MPC]\label{thm:truncation}
After the decomposition of Theorem \ref{thm:blockdiagmpc} has been applied to a block circulant MPC problem of order $n$, it is sufficient to examine the first $n/2$ (for $n$ even) or the first $(n-1)/2$ (for $n$ odd) blocks of matrices $\hat{A},\hat{B},\hat{Q},\hat{R},\hat{P}$ and $\hat{C_i},\hat{D_i}$ for $i=1,\dots,p$.
\end{corollary}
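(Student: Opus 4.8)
The plan is to show that the $n$ modal blocks of each decomposed matrix occur in complex-conjugate pairs, so that once the first half of the blocks is known the remaining blocks are redundant. The central tool is the pattern of complex conjugates of Corollary~\ref{thm:pattern_cn}, which Corollary~\ref{thm:diag_bc} already extends to the block-diagonalized setting.

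First I would dispatch the matrices $\hat{A}$, $\hat{B}$, $\hat{Q}$, $\hat{R}$ and $\hat{C}_i$, $\hat{D}_i$ for $i=1,\dots,p$, all of which arise directly from the block-diagonalization formula \eqref{eq:nu_j}. Since each of their blocks is obtained as a polynomial in the roots of unity $\rho_j$ exactly as in \eqref{eq:nu_j}, Corollary~\ref{thm:diag_bc} applies verbatim and the blocks inherit the conjugacy pattern: the $j$-th block equals the conjugate of the $(n-j)$-th block, with the block at $j=0$ (and, for $n$ even, at $j=n/2$) real. Consequently the second half of the blocks is determined by the first half, and only the first $n/2$ (for $n$ even) or $(n-1)/2$ (for $n$ odd) blocks, together with the real boundary blocks, carry independent information.

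The only block requiring a separate argument is $\hat{P}$, since it is produced not by \eqref{eq:nu_j} but by the Riccati recursion. Here I would invoke Lemma~\ref{thm:dare}, which establishes that each $\hat{p}_j$ is the stabilizing solution of the DARE \eqref{eq:dare} assembled from the block-$j$ data $\hat{a}_j$, $\hat{b}_j$, $\hat{q}_j$, $\hat{r}_j$. Taking the complex conjugate of this block equation term by term, and using the identity $\overline{M^H}=(\overline{M})^H$ together with the conjugacies $\overline{\hat{a}_j}=\hat{a}_{n-j}$, $\overline{\hat{b}_j}=\hat{b}_{n-j}$, $\overline{\hat{q}_j}=\hat{q}_{n-j}$ and $\overline{\hat{r}_j}=\hat{r}_{n-j}$ established above, the conjugated equation is precisely the block-$(n-j)$ DARE with $\overline{\hat{p}_j}$ in the role of its solution. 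Appealing to the uniqueness of the stabilizing solution assumed in Section~\ref{sec:mpc} then forces $\hat{p}_{n-j}=\overline{\hat{p}_j}$, so $\hat{P}$ exhibits the same conjugacy pattern as the other matrices and the same truncation applies.

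I expect the conjugation of the DARE to be the main obstacle: one must verify that complex conjugation commutes with the Hermitian transpose and with the matrix inverse appearing in \eqref{eq:dare}, so that the conjugated equation is genuinely the block-$(n-j)$ DARE rather than a superficially similar one, and then lean on uniqueness to identify $\overline{\hat{p}_j}$ as \emph{the} stabilizing solution rather than merely \emph{a} solution. Once the conjugate-pairing of all blocks is in place, the counting is immediate, and the claimed sufficiency of examining only the first half of the blocks follows.
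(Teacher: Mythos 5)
Your proposal is correct, and for the matrices $\hat{A},\hat{B},\hat{Q},\hat{R},\hat{C}_i,\hat{D}_i$ it coincides exactly with the paper, whose entire proof is the single line ``direct consequence of Corollary~\ref{thm:diag_bc}.'' Where you genuinely diverge is in the treatment of $\hat{P}$. The paper's implicit route is: Lemma~\ref{thm:dare} shows that $\hat{P}=\psi_x^H P\psi_x$ is block diagonal; since $P$ is real, the \emph{converse} direction of the ``iff'' in Corollary~\ref{thm:diag_bc} then forces $P\in\mathcal{BC}(n,n_x,n_x)$, and the conjugate pattern of the blocks $\hat{p}_j$ follows from \eqref{eq:nu_j} just as for the other matrices. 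You instead conjugate each modal DARE, use $\overline{M^H}=(\overline{M})^H$ and $\overline{M^{-1}}=(\overline{M})^{-1}$ together with $\overline{\hat{a}_j}=\hat{a}_{n-j}$ (etc.) to identify the conjugated block-$j$ equation with the block-$(n-j)$ equation, and appeal to uniqueness of the stabilizing solution to conclude $\hat{p}_{n-j}=\overline{\hat{p}_j}$ (noting correctly that conjugation preserves spectral radius, so $\overline{\hat{p}_j}$ is indeed the \emph{stabilizing} solution of the block-$(n-j)$ DARE). Your argument is more self-contained and makes explicit what the paper leaves tacit, at the cost of invoking the uniqueness assumption from Section~\ref{sec:mpc}; the paper's route is shorter but leans on the converse implication of Corollary~\ref{thm:diag_bc} without saying so. One cosmetic caveat applies to both: for $n$ even the block at index $n/2$ is real but not determined by the blocks of index $0,\dots,n/2-1$, a boundary bookkeeping issue already present in the paper's own statement and Definition~\ref{def:truncaug}, which you flag but need not resolve.
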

\begin{proof}
Direct consequence of Corollary \ref{thm:diag_bc}.
\end{proof}

In case the original matrices are full, i.e. they have $n^2n_{x,y,u}^2$ nonzero (real) elements with $p=1$, Theorem \ref{thm:blockdiagmpc} reduces the number of nonzero elements by $1/n$, resulting in $nn_{x,y,u}^2$ nonzero complex elements. Moreover, by exploiting the block-diagonal structure and pattern of complex conjugates, Corollary \ref{thm:truncation} states that only half of the blocks are required for the ADMM algorithm. Similarly, when a vector is projected into the Fourier domain by setting $\hat{x} = \psi_x^H x$, only the first $nn_x/2$ elements must be examined.

Before advancing to the decomposition of \eqref{eq:mpcqp}, the truncation of Corollary \ref{thm:truncation} and its counterpart operation, the augmentation, are formally defined.

\begin{definition}[Truncation and Augmentation]\label{def:truncaug}
Given $\inC{A=\diag\lbrace a_1,\dots,a_n\rbrace}{np}{nm}$, $x\in\C^{np}$ and $n$ even (odd), let $\trunc$ be the operator that extracts the first $\frac{n}{2}$ ($\frac{n-1}{2}$) blocks of $A$ and the first $\frac{n}{2}p$ rows of $x$. Conversely, let $\aug$ be the inverse operator which accepts a truncated vector $\p{x}=(x_1,\dots,x_{\frac{n}{2}})^\Tr\in\C^{\frac{n}{2}p}$ with $x_i\in\C^p$ and returns $x=(x_1,\dots,x_{\frac{n}{2}},\bar{x}_{\frac{n}{2}-1,\dots,\bar{x}_2})^\Tr\in\C^{np}$ for even $n$. For odd $n$, operator $\aug$ accepts a vector $\p{x}=(x_1,\dots,x_{\frac{n-1}{2}})^\Tr\in\C^{\frac{n-1}{2}p}$ with $x_i\in\C^p$ and returns $x=(x_1,\dots,x_{\frac{n-1}{2}},\bar{x}_{\frac{n-1}{2},\dots,\bar{x}_2})^\Tr\in\C^{np}$. Define $\aug$ in a similar way when invoked with a truncated block diagonal matrix $\p{A}=\diag\lbrace a_1,\dots,a_\frac{n}{2}\rbrace$. In addition, define
\begin{align*}
\trunc_N & \eqdef\I_N\otimes \trunc,\\
\aug_N & \eqdef \I_N\otimes \aug,
\end{align*}
i.e. the operators $\aug$ and $\trunc$ applied $N$ times.\end{definition}
 Note that $\aug$ and $\trunc$ are linear operators. In addition, it holds that $\trunc\lbrace J z\rbrace=\trunc\lbrace J \rbrace \trunc \lbrace z \rbrace$ and $\aug\lbrace \p{J} \p{z}\rbrace=\aug\lbrace \p{J} \rbrace \aug \lbrace \p{z} \rbrace$ for block diagonal matrices $J,\p{J}$ and vectors $z,\p{z}$ of appropriate sizes.

%%%%%%%%%%%%%%%%%%%%%%%%%%%%%%%%%%%%%%%%%%%%%%%%%%%%%%%%%%%%%%%
% SECTION 4: B CIRCULANT QP, B CIRCULANT ADMM, COMPLEXITY
%%%%%%%%%%%%%%%%%%%%%%%%%%%%%%%%%%%%%%%%%%%%%%%%%%%%%%%%%%%%%%%
\section{Block Circulant ADMM Algorithm}\label{sec:blockcirculantadmm}

\subsection{Constrained QP with Block Circulant Blocks}\label{sec:block_qp}
\begin{definition}[Constrained Block Circulant QP]\label{def:blockcircqp}
The following real valued constrained QP
\begin{subequations}
\begin{align}
\min \,\,\,&\frac{1}{2}z^\Tr J z +q^\Tr z\\
s.t.\,\,\, &Kz-v=0\\
     &\ubar{v} \leq v \leq \bar{v}.\label{eq:blockcircqp_ineq}
\end{align}\label{eq:blockcircqp}\end{subequations}
is called a \emph{constrained block circulant QP} of order $n$ if there exists a partitioning of vectors $z$ and $v$ into $N_z$ and $N_v$ segments of lengths $l_z^1,\dots,l_z^{N_z}$ and $l_v^1,\dots,l_v^{N_v}$, respectively, that partition matrices $J$ and $K$ as
\begin{align*}
J = \begin{bmatrix}
J_{11} & \dots & J_{1N_z}\\
\vdots &       & \vdots\\
J_{N_z1} & \dots & J_{N_zN_z}
\end{bmatrix},
K = \begin{bmatrix}
K_{11} & \dots & K_{1N_z}\\
\vdots &       & \vdots\\
K_{N_v1} & \dots & K_{N_vN_z}
\end{bmatrix},
\end{align*}
for which all blocks are block circulant matrices of order $n$,
\begin{align*}
J_{kj} \in \mathcal{BC}(n,l_z^k,l_z^j),\qquad
K_{wj} \in \mathcal{BC}(n,l_v^w,l_z^j),
\end{align*}
for $k,j=1,\dots,N_z$ and $w=1,\dots,N_v$.
\end{definition}
%%%%%%%%%%%%%%%%%%%%%%%%%%%%%%
%%%%%%%%%%%%%%%%%%%%%%%%%%%%%% THM END
The augmented Lagrangian for problem \eqref{def:blockcircqp} is obtained as
%\begin{align}\label{eq:lagrangian}
%L =\,\, &\frac{1}{2}z^\Tr J z +q^\Tr z + \frac{\rho}{2} \twonorm{Kz-v}^{2}+\gamma^\Tr(Kz-v)\nonumber\\
%&+\ubar{\lambda}^\Tr(-v+\ubar{v})+\bar{\lambda}^\Tr(v-\bar{v}),
%\end{align}
\begin{align}\label{eq:lagrangian}
\begin{split}
L(z,v,\gamma,\ubar{\lambda},&\bar{\lambda}) = \frac{1}{2}z^\Tr J z +q^\Tr z +\frac{\rho}{2} \twonorm{Kz-v}^{2}+\\
&\gamma^\Tr(Kz-v)+\ubar{\lambda}^\Tr(-v+\ubar{v})+\bar{\lambda}^\Tr(v-\bar{v}),
\end{split}
\end{align}
where we have introduced dual variables $\ubar{\lambda}$ and $\bar{\lambda}$ associated with the inequality constraints \eqref{eq:blockcircqp_ineq}. An optimal solution of \eqref{def:blockcircqp} is a saddle point of \eqref{eq:lagrangian} and must satisfy the following Karush-Kuhn-Tucker (KKT) conditions \cite{BOYDCONVEX},
\begin{subequations}
\begin{align}
Kz^* - v^* = 0,\quad v^*-\bar{v} \leq 0,\quad -v^*+\ubar{v} \leq 0\label{eq:pf}\tag{PF}\\
\bar{\lambda}^* \geq 0,\quad \ubar{\lambda}^* \geq 0\label{eq:df}\tag{DF}\\
\bar{\lambda}^{*\Tr}(v^*-\bar{v}) = 0,\quad \ubar{\lambda}^{*\Tr}(\ubar{v}-v^*) = 0\label{eq:cs}\tag{CS}\\
\nabla L = \begin{bmatrix}\frac{J+J^\Tr}{2}+\rho K^\Tr K & -\rho K^\Tr\\ -\rho K & \rho\end{bmatrix}
\begin{pmatrix}z^* \\ v^*\end{pmatrix} +
\begin{bmatrix}K^\Tr\\ -\I\end{bmatrix}\gamma^*\nonumber\\
+\begin{bmatrix}0\\ \I\end{bmatrix}\bar{\lambda}^* +
\begin{bmatrix}0\\ -\I\end{bmatrix}\ubar{\lambda}^*+ \begin{pmatrix}q \\ 0\end{pmatrix}=0\label{eq:stat}\tag{ST}
\end{align}\label{eq:KKToriginal}\end{subequations}
where \eqref{eq:pf} stands for primal feasibility, \eqref{eq:df} for dual feasibility, \eqref{eq:cs} complementary slackness and \eqref{eq:stat} for stationarity.
The following Corollary connects a block circulant MPC problem from Definition \ref{def:blockcircmpc} to the constrained block circulant quadratic program (CBCQP):
%%%%%%%%%%%%%%%%%%%%%%%%%%%%%% THM START
%%%%%%%%%%%%%%%%%%%%%%%%%%%%%%
\begin{corollary}\label{thm:blocpmpisblockqp}
A block circulant MPC problem leads to a CBCQP with $N_z=T$, $N_v=Tp$ and $J_{kj} \in \mathcal{BC}(n,n_u,n_u)$ and $K_{wj} \in \mathcal{BC}(n,n_y,n_u)$.
\end{corollary}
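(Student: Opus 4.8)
The plan is to identify the QP \eqref{eq:mpcqp} with data \eqref{eq:mpcqpmatrices} as an instance of Definition \ref{def:blockcircqp}, using the partition of $z=(u_0,\dots,u_{T-1})^\Tr$ into $N_z=T$ segments of length $nn_u$ and of $v=(y_0,\dots,y_{T-1})^\Tr$ into $N_v=Tp$ segments of length $nn_y$; the factor $p$ arises because each output block $y_k$ splits into the $p$ stacked groups induced by $C=[C_1,\dots,C_p]^\Tr$ and $D=[D_1,\dots,D_p]^\Tr$. Everything then reduces to checking that each resulting block of $J$ and $K$ is block circulant of order $n$, which I would establish purely from the closure properties \eqref{eq:prop1_bc}--\eqref{eq:prop5_bc}.

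First I would record the elementary building blocks. Repeated application of \eqref{eq:prop5_bc} to $A\in\mathcal{BC}(n,n_x,n_x)$ gives $A^\ell\in\mathcal{BC}(n,n_x,n_x)$ for every $\ell\ge 0$, whence $A^\ell B\in\mathcal{BC}(n,n_x,n_u)$ and $C_iA^\ell\in\mathcal{BC}(n,n_y,n_x)$, $C_iA^\ell B\in\mathcal{BC}(n,n_y,n_u)$. Consequently every nonzero block of $G$ in \eqref{eq:dynamics} lies in $\mathcal{BC}(n,n_x,n_u)$ and every block of $H$ in $\mathcal{BC}(n,n_x,n_x)$, zero blocks being trivially block circulant. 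I would also invoke Lemma \ref{thm:dare}: since $\hat{P}=\psi_x^H P\psi_x$ is block diagonal, Corollary \ref{thm:diag_bc} gives $P\in\mathcal{BC}(n,n_x,n_x)$, so the middle matrix $(\I_T\otimes Q)\oplus P$ is block diagonal with every diagonal block in $\mathcal{BC}(n,n_x,n_x)$.

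Next I would assemble $J$ and $K$ blockwise. For $J=G^\Tr\big((\I_T\otimes Q)\oplus P\big)G+(\I_T\otimes R)$, transposition via \eqref{eq:prop2_bc} sends the blocks of $G^\Tr$ into $\mathcal{BC}(n,n_u,n_x)$, and each $(k,j)$ block of the product is a finite sum of triple products lying in $\mathcal{BC}(n,n_u,n_x)\cdot\mathcal{BC}(n,n_x,n_x)\cdot\mathcal{BC}(n,n_x,n_u)$; by \eqref{eq:prop5_bc} and \eqref{eq:prop4_bc} each such sum is in $\mathcal{BC}(n,n_u,n_u)$, and adding the diagonal blocks $R$ of $\I_T\otimes R$ preserves this, giving $J_{kj}\in\mathcal{BC}(n,n_u,n_u)$. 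For $K=[\I_T\otimes C\,\vert\,0]G+(\I_T\otimes D)$, I would read off each block as $C_iA^\ell B$ plus a $D_i$ term on the appropriate diagonal positions; each term is a product or sum of block circulant matrices, so $K_{wj}\in\mathcal{BC}(n,n_y,n_u)$ for $w=1,\dots,Tp$.

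The algebra above is routine once the closure properties are in hand, so the hard part will be the bookkeeping rather than any single estimate: correctly handling the $[\,\cdot\,\vert\,0]$ truncation and the shifted index ranges in $G$, and above all tracking how the stacked partition $C=[C_1,\dots,C_p]^\Tr$, $D=[D_1,\dots,D_p]^\Tr$ refines each time-step block of $v$ into $p$ groups, which is exactly what produces $N_v=Tp$ while leaving $N_z=T$. Collecting these observations shows that \eqref{eq:mpcqp} satisfies Definition \ref{def:blockcircqp} with the stated block dimensions.
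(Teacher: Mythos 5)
Your proposal is correct and follows exactly the route the paper takes: its proof of this corollary is a one-line appeal to the closure properties \eqref{eq:prop1_bc}--\eqref{eq:prop5_bc} together with the definitions \eqref{eq:mpcqpmatrices} and \eqref{eq:dynamics}, and your argument simply fills in the blockwise bookkeeping (including the step, glossed over in the paper, of deducing $P\in\mathcal{BC}(n,n_x,n_x)$ from Lemma \ref{thm:dare} and Corollary \ref{thm:diag_bc}). Nothing further is needed.
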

\begin{proof}
This is a direct consequence of properties \eqref{eq:prop1_bc}-\eqref{eq:prop5_bc} and Definitions \eqref{eq:mpcqpmatrices} and \eqref{eq:dynamics}.
\end{proof}
%%%%%%%%%%%%%%%%%%%%%%%%%%%%%%
%%%%%%%%%%%%%%%%%%%%%%%%%%%%%% THM END
 As the matrices in Definition \ref{def:blockcircqp} are composed of block circulant matrices, the results from Sections \ref{sec:circ_matrices} and \ref{sec:bcirc_decomp} suggest that there exists a coordinate transformation $(\tilde{z},\tilde{v})=(\psi_z^Hz, \psi_v^Hv)$ that block diagonalizes each block of $K$ and~$J$. However, how the complex-valued transformation affects the minimization in \eqref{eq:blockcircqp} is not immediately obvious. The following theorem answers this question:
%%%%%%%%%%%%%%%%%%%%%%%%%%%%%% THM START
%%%%%%%%%%%%%%%%%%%%%%%%%%%%%%
\begin{theorem}[Decomposition of Block Circulant QP]\label{thm:decompblockqp}
Given a CBCQP of order $n$, then the following CBCQP,
\begin{subequations}
\begin{align}
\min_{\tilde{z}\in\Sc_z,\tilde{v}\in\Sc_v} \,\,\,&\frac{1}{2}\tilde{z}^H \tilde{J} \tilde{z} +\tilde{q}^H \tilde{z}\label{eq:tildeqpA}\\
s.t.\,\,\, &\tilde{K}\tilde{z}-\tilde{v}=0\label{eq:tildeqpB}\\
     &\ubar{v} \leq \psi_v\tilde{v} \leq \bar{v}\label{eq:tildeqpC}.
\end{align}\label{eq:blockcircqp_decomp}\end{subequations}
where $\tilde{q}=\psi_z^H q,\,\,\tilde{J}=\psi_z^H J \psi_z,\,\,\tilde{K}=\psi_v^H K \psi_z,\,\,\psi_j = \diag \lbrace F_n\otimes\I_{l_j^1},\dots,F_n\otimes\I_{l_j^{N_j}} \rbrace$ for $j=\lbrace z,v\rbrace$ and the sets $\Sc_j$ restrict each of the segments of $\tilde{z}$ and $\tilde{v}$ to the pattern of complex conjugates from Corollary \ref{thm:diag_bc}, is equivalent to \eqref{eq:blockcircqp} in the sense that
\begin{subequations}
\begin{gather}
z^* = \psi_z \tilde{z}^*,\quad v^* = \psi_v \tilde{v}^*,\\
\gamma^* = \psi_v \tilde{\gamma}^*,\quad \bar{\lambda}^* = \tilde{\bar{\lambda}}^*,\quad \ubar{\lambda}^* = \tilde{\ubar{\lambda}}^*,
\end{gather}\label{eq:decomp_equivalence}\end{subequations}
where $(z,v,\gamma,\bar\lambda,\ubar\lambda)^*$ and $(\tilde z,\tilde v,\tilde \gamma,\tilde{\bar\lambda},\tilde{\ubar\lambda})^*$ are primal and dual optimizers for \eqref{eq:blockcircqp} and \eqref{eq:blockcircqp_decomp}, respectively.
\end{theorem}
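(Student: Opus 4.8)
The plan is to exploit that both \eqref{eq:blockcircqp} and \eqref{eq:blockcircqp_decomp} are convex quadratic programs, so that the KKT conditions \eqref{eq:KKToriginal} are necessary and sufficient for optimality; the theorem then reduces to showing that the coordinate change maps the KKT points of \eqref{eq:blockcircqp} bijectively onto those of \eqref{eq:blockcircqp_decomp} via the relations \eqref{eq:decomp_equivalence}. First I would record the two facts that do all the work: the matrices $\psi_z,\psi_v$ are unitary (being block-diagonal Kronecker products of the unitary $F_n$ with identities), and by Corollary~\ref{thm:pattern_cn} the membership $\tilde z\in\Sc_z$ (resp. $\tilde v\in\Sc_v$) is exactly equivalent to $z=\psi_z\tilde z$ (resp. $v=\psi_v\tilde v$) being real. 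Hence $\tilde z\mapsto\psi_z\tilde z$ is a bijection from $\Sc_z$ onto $\R^{\dim z}$, and likewise for $v$, so that \eqref{eq:blockcircqp_decomp} is a genuine reparametrization rather than a new problem.

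Next I would transform the primal data. Because $z$ and $q$ are real, $z^\Tr=z^H$ and $q^\Tr=q^H$, so substituting $z=\psi_z\tilde z$ and using $\psi_z^H\psi_z=\I$ gives $z^\Tr Jz=\tilde z^H\tilde J\tilde z$ and $q^\Tr z=\tilde q^H\tilde z$; the objective of \eqref{eq:blockcircqp} therefore equals that of \eqref{eq:tildeqpA} pointwise along the bijection. Left-multiplying the equality constraint $Kz-v=0$ by $\psi_v^H$ and inserting $\psi_v\psi_v^H=\I$ turns it into $\tilde K\tilde z-\tilde v=0$, i.e. \eqref{eq:tildeqpB}, while the box constraint is simply $\ubar{v}\le\psi_v\tilde v\le\bar{v}$, i.e. \eqref{eq:tildeqpC}, since $v=\psi_v\tilde v$. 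At this stage the feasible sets and objective values are in objective-preserving bijection, which already yields $z^*=\psi_z\tilde z^*$ and $v^*=\psi_v\tilde v^*$.

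For the multipliers I would transform the stationarity condition \eqref{eq:stat} block by block, left-multiplying its $z$-row by $\psi_z^H$ and its $v$-row by $\psi_v^H$, substituting \eqref{eq:decomp_equivalence}, and repeatedly inserting $\psi_z^H\psi_z=\I$ and $\psi_v^H\psi_v=\I$. The identities $\psi_z^H\tfrac{J+J^\Tr}{2}\psi_z=\tfrac{\tilde J+\tilde J^H}{2}$, $\psi_z^H K^\Tr\psi_v=\tilde K^H$ (using that $K$ is real, so $K^\Tr=K^H$), and $\psi_z^H K^\Tr K\psi_z=\tilde K^H\tilde K$ convert every real transpose into the Hermitian transpose required by \eqref{eq:blockcircqp_decomp}, while $\psi_v^H\gamma^*=\tilde\gamma^*$ produces $\gamma^*=\psi_v\tilde\gamma^*$. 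The box-constraint multipliers appear unchanged because in \eqref{eq:tildeqpC} the inequalities are still imposed on the real vector $\psi_v\tilde v$, so $\bar\lambda,\ubar\lambda$ multiply the same constraints in both problems, giving $\bar\lambda^*=\tilde{\bar\lambda}^*$ and $\ubar\lambda^*=\tilde{\ubar\lambda}^*$; primal feasibility \eqref{eq:pf}, dual feasibility \eqref{eq:df}, and complementary slackness \eqref{eq:cs} then carry over by direct substitution.

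The step I expect to be the main obstacle is the stationarity bookkeeping: one must track carefully that the unitary conjugation sends the real transposes of the original problem to Hermitian transposes, not ordinary transposes, of the complex data, and one must justify that minimizing over the real-linear subspaces $\Sc_z,\Sc_v$ introduces no additional multipliers beyond those transported from \eqref{eq:KKToriginal}. The latter is what legitimizes treating \eqref{eq:blockcircqp_decomp} as a faithful reparametrization; it holds precisely because $\Sc_z,\Sc_v$ are the images under $\psi_z^H,\psi_v^H$ of the real spaces, so the transported stationarity residual already lies in these subspaces and no projection term survives.
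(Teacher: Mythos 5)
Your proposal is correct and follows essentially the same route as the paper: both arguments establish the equivalence by showing that the unitary maps $\psi_z,\psi_v$ carry the KKT system of one problem onto that of the other, with the transposes becoming Hermitian transposes and the box-constraint multipliers left unchanged. The only cosmetic difference is direction — the paper computes the gradient of the complex augmented Lagrangian of \eqref{eq:blockcircqp_decomp} (with indicator functions of $\Sc_z,\Sc_v$) and pre-multiplies by $\diag\lbrace\psi_z,\psi_v\rbrace$ to recover \eqref{eq:stat}, whereas you transport \eqref{eq:stat} forward by $\psi_z^H,\psi_v^H$; your explicit remark that the stationarity residual already lies in $\Sc_z\times\Sc_v$, so the subspace constraints contribute no extra multiplier, is a point the paper leaves implicit.
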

\begin{proof}
It is sufficient to show that using \eqref{eq:decomp_equivalence}, the KKT conditions of \eqref{eq:blockcircqp_decomp} yield \eqref{eq:KKToriginal}. The augmented Lagrangian for \eqref{eq:blockcircqp_decomp} is obtained as
\begin{align}\label{eq:lagrangian_decomp}
\begin{split}
\tilde{L}(&\tilde{z},\tilde{v},\tilde{\gamma},\tilde{\ubar{\lambda}},\tilde{\bar{\lambda}}) = \frac{1}{2}\tilde{z}^H \tilde{J} \tilde{z} +\tilde{q}^H \tilde{z} + \frac{\rho}{2} \twonorm{\tilde{K}\tilde{z}-\tilde{v}}^{2}\\
&+\tilde{\gamma}^H(\tilde{K}\tilde{z}-\tilde{v})+\tilde{\ubar{\lambda}}^\Tr(-\psi_v\tilde{v}+\ubar{v})+\tilde{\bar{\lambda}}^\Tr(\psi_v\tilde{v}-\bar{v})\\
&+\mathcal{I}_{\mathcal{S}_z}(\tilde{z})+\mathcal{I}_{\mathcal{S}_v}(\tilde{v}).
\end{split}
\end{align}
Note that because $\tilde{z}\in\Sc_z$ and $\tilde{v}\in\Sc_v$, $\psi_v\tilde{v}$ is real-valued and $\tilde{K}\tilde{z}-\tilde{v}\in\Sc_v$. The former implies that $\tilde{\bar{\lambda}}$ and $\tilde{\ubar{\lambda}}$ are real-valued. The latter implies that $\tilde{\gamma}$ must have the same pattern of complex conjugates as $\tilde{v}$, which can be seen by formulating \eqref{eq:tildeqpB} as two inequality constraints and evaluating the complementary slackness conditions. In addition, both observations entail that $\tilde{L}$ is a real-valued function. Conditions \eqref{eq:pf}, \eqref{eq:df} and \eqref{eq:cs} are recovered by substituting $\tilde{K}=\psi_v^H K \psi_z$ and \eqref{eq:decomp_equivalence}. The partial derivatives of $\tilde{L}$ are calculated as $\frac{\partial \tilde{L}}{\partial (\cdot)} = \frac{\partial \tilde{L}}{\partial Re(\cdot)}+i\frac{\partial \tilde{L}}{\partial Im(\cdot)}$ \cite[p. 798]{ADAPTIVEFILTER}. The gradient $\nabla \tilde{L}$ evaluates to
\begin{align}\label{eq:gradienttilde}
\begin{split}
\begin{bmatrix}\frac{\tilde{J}+\tilde{J}^H}{2}+\rho \tilde{K}^H \tilde{K} & -\rho \tilde{K}^H\\ -\rho \tilde{K} & \rho\end{bmatrix}
\begin{pmatrix}\tilde{z}^* \\ \tilde{v}^*\end{pmatrix} +
\begin{bmatrix}\tilde{K}^H\\ -\I\end{bmatrix}\tilde{\gamma}^*\\
+\begin{bmatrix}0\\ \psi_v^H\end{bmatrix}\tilde{\bar{\lambda}}^* +
\begin{bmatrix}0\\ -\psi_v^H\end{bmatrix}\tilde{\ubar{\lambda}}^*+ \begin{pmatrix}\tilde{q} \\ 0\end{pmatrix}=0.
\end{split}\tag{$\tilde{\text{ST}}$}
\end{align}
Pre-multiplying \eqref{eq:gradienttilde} by $\diag\lbrace \psi_z,\psi_v\rbrace$ and inserting claim \eqref{eq:decomp_equivalence} completes the proof.
\end{proof}
%%%%%%%%%%%%%%%%%%%%%%%%%%%%%%
%%%%%%%%%%%%%%%%%%%%%%%%%%%%%% THM END
 Now that CBCQP \eqref{eq:blockcircqp} has been block diagonalized, the following theorem uses the pattern of complex conjugates from Corollary \ref{thm:diag_bc} in order to truncate problem \eqref{eq:blockcircqp_decomp}.
%%%%%%%%%%%%%%%%%%%%%%%%%%%%%% THM START
%%%%%%%%%%%%%%%%%%%%%%%%%%%%%%
\begin{theorem}[Truncation of Block Circulant QP]\label{thm:truncblockqp}
Given a CBCQP of order $n$ which has been decomposed according to Theorem \ref{thm:decompblockqp}, then the following CBCQP,
\begin{subequations}
\begin{align}
\min \,\,\,&\frac{1}{2}Re(\hat{z}^H \hat{J} \hat{z}) +Re(\hat{q}^H \hat{z})\label{eq:hatqpA}\\
s.t.\,\,\, &\aug_{N_v}\lbrace\hat{K}\hat{z}-\hat{v}\rbrace=0\label{eq:hatqpB}\\
     &\ubar{v} \leq \psi_v\aug_{N_v}\lbrace\hat{v}\rbrace \leq \bar{v}\label{eq:hatqpC}.
\end{align}\label{eq:blockcircqp_trunc}\end{subequations}
where $\hat{q}=\trunc_{N_z}\tilde{q},\,\,\hat{J}=\trunc_{N_z}\tilde{J}$ and $\hat{K}=\trunc_{N_v}\tilde{K}$, is equivalent to \eqref{eq:blockcircqp_decomp} in the sense that
\begin{subequations}
\begin{gather}
\hat{z}^* = \trunc_{N_z} \tilde{z}^*,\quad \hat{v}^* = \trunc_{N_v} \tilde{v}^*,\\
\hat{\gamma}^* = \tilde{\gamma}^*,\quad \hat{\bar{\lambda}}^* = \tilde{\bar{\lambda}}^*,\quad \hat{\ubar{\lambda}}^* = \tilde{\ubar{\lambda}}^*,
\end{gather}\label{eq:trunc_equivalence}\end{subequations}
where $(\hat{z},\hat{v},\hat{\gamma},\hat{\bar\lambda},\hat{\ubar\lambda})^*$ and $(\tilde z,\tilde v,\tilde \gamma,\tilde{\bar\lambda},\tilde{\ubar\lambda})^*$ are primal and dual optimizers for \eqref{eq:blockcircqp_trunc} and \eqref{eq:blockcircqp_decomp}, respectively.
\end{theorem}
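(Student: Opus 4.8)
The plan is to mirror the proof of Theorem \ref{thm:decompblockqp}: instead of comparing objective values directly, I would show that the KKT system of the truncated problem \eqref{eq:blockcircqp_trunc}, together with the identifications \eqref{eq:trunc_equivalence}, reproduces the KKT system of the decomposed problem \eqref{eq:blockcircqp_decomp}. The guiding observation is that $\Sc_z$ and $\Sc_v$ are exactly the ranges of $\aug_{N_z}$ and $\aug_{N_v}$, so that a vector lies in $\Sc_z$ if and only if it equals $\aug_{N_z}\lbrace\trunc_{N_z}(\cdot)\rbrace$; hence $\trunc$ parametrizes $\Sc_z,\Sc_v$ bijectively and the decomposed variables may be replaced by their truncations via $\tilde z=\aug_{N_z}\lbrace\hat z\rbrace$ and $\tilde v=\aug_{N_v}\lbrace\hat v\rbrace$. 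Because the truncated problem deliberately keeps the equality constraint at full dimension (through $\aug_{N_v}$) and leaves the inequality $\ubar{v}\le\psi_v\aug_{N_v}\lbrace\hat v\rbrace\le\bar{v}$ literally unchanged, the multipliers $\gamma,\bar\lambda,\ubar\lambda$ live in the same spaces as in \eqref{eq:blockcircqp_decomp}, which is why \eqref{eq:trunc_equivalence} identifies them without truncation.

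First I would dispatch feasibility and slackness. Since $\aug_{N_v}$ is injective and linear, and using the multiplicative identity noted after Definition \ref{def:truncaug}, $\aug_{N_v}\lbrace\hat K\hat z-\hat v\rbrace=\aug_{N_v}\lbrace\hat K\rbrace\aug_{N_z}\lbrace\hat z\rbrace-\aug_{N_v}\lbrace\hat v\rbrace=\tilde K\tilde z-\tilde v$, so \eqref{eq:hatqpB} is exactly \eqref{eq:tildeqpB}. Primal feasibility of the inequality, dual feasibility and complementary slackness then coincide termwise with those of \eqref{eq:blockcircqp_decomp}, since they are stated in the unchanged real quantity $\psi_v\aug_{N_v}\lbrace\hat v\rbrace=\psi_v\tilde v$ and the unchanged real multipliers. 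As in Theorem \ref{thm:decompblockqp}, one records beforehand that $\psi_v\aug_{N_v}\lbrace\hat v\rbrace$ is real and $\aug_{N_v}\lbrace\hat K\hat z-\hat v\rbrace\in\Sc_v$, so that $\hat{\bar\lambda},\hat{\ubar\lambda}$ are real and $\hat\gamma$ inherits the pattern of complex conjugates.

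The substance is the stationarity condition. Here I would exploit that $\tilde J,\tilde K,\tilde q$ are block diagonal in the modal index, so \eqref{eq:gradienttilde} decouples into one equation per mode, and that by Corollary \ref{thm:diag_bc} these equations are conjugate-symmetric across modes (the equation for mode $n-k$ is the complex conjugate of the one for mode $k$, the term $\psi_v^H\tilde{\bar\lambda}$ sharing this pattern). It therefore suffices to recover the stationarity equations for the retained modes, and this is precisely what differentiating the truncated Lagrangian with respect to $\hat z$ and $\hat v$ produces, using the Wirtinger rule $\partial/\partial(\cdot)=\partial/\partial Re(\cdot)+i\,\partial/\partial Im(\cdot)$ from the previous proof; augmenting these equations (filling in their conjugates) and inserting \eqref{eq:trunc_equivalence} then returns \eqref{eq:gradienttilde}.

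The main obstacle is the bookkeeping of factors of two in this last step. The operator $\aug_{N_v}$ makes each conjugate-paired mode appear twice in the penalty $\tfrac{\rho}{2}\twonorm{\aug_{N_v}\lbrace\hat K\hat z-\hat v\rbrace}^2$ and in the Hermitian pairing against $\hat\gamma$, whereas each self-conjugate mode (mode $0$, and additionally mode $n/2$ when $n$ is even) appears only once; the same asymmetry surfaces when $\hat z^H\hat J\hat z$ is reduced to its real part through $\zeta_k^H\Theta_k\zeta_k+\zeta_{n-k}^H\Theta_{n-k}\zeta_{n-k}=2Re(\zeta_k^H\Theta_k\zeta_k)$ for a conjugate pair $(k,n-k)$, with $\zeta_j,\Theta_j$ the modal components of $\tilde z,\tilde J$. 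I would have to verify that the weight the objective places on each paired mode matches the weight the augmented penalty and the multiplier terms place on that same mode, so that after cancelling the common per-mode factor every retained equation agrees with its decomposed counterpart, while the singly-counted self-conjugate modes stay consistent. Pinning down exactly which blocks $\trunc$ retains and how $\aug$ reconstructs the self-conjugate modes is where this reconciliation is most delicate; once the weights are seen to be consistent across the objective, the penalty and the dual pairing, the equivalence \eqref{eq:trunc_equivalence} follows.
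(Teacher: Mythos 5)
Your proposal follows essentially the same route as the paper's proof: form the augmented Lagrangian of the truncated problem (with the equality constraint augmented so that the Lagrangian stays real-valued), recover \eqref{eq:pf}, \eqref{eq:df} and \eqref{eq:cs} directly from the identifications \eqref{eq:trunc_equivalence} using $\tilde{K}=\aug_{N_v}\lbrace\hat{K}\rbrace$, compute stationarity via the Wirtinger-type derivative, and augment the resulting equations to recover \eqref{eq:gradienttilde}. The per-mode factor bookkeeping you flag as the delicate step is exactly what the paper leaves implicit when it simply states the gradient \eqref{eq:gradienthat} and asserts that augmenting both rows yields \eqref{eq:gradienttilde}, so your added care there is consistent with, and slightly more explicit than, the published argument.
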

\begin{proof}
As for the proof of Theorem \ref{thm:decompblockqp}, one can show that problem \eqref{eq:blockcircqp_trunc} and \eqref{eq:blockcircqp_decomp} satisfy the same KKT conditions under claim \eqref{eq:trunc_equivalence}. Note that it is necessary to augment the equality constraints \eqref{eq:hatqpB} in order to obtain a real-valued Lagrangian:
\begin{align}\label{eq:lagrangian_trunc}
\begin{split}
\hat{L}(\hat{z},\hat{v},&\hat{\gamma},\hat{\ubar{\lambda}},\hat{\bar{\lambda}}) =\frac{1}{2}Re(\hat{z}^H \hat{J} \hat{z}) +Re(\hat{q}^H \hat{z})\\
&+ \frac{\rho}{2} \twonorm{\hat{K}\hat{z}-\hat{v}}^{2}+\hat{\gamma}^H\aug_{N_v}\lbrace\hat{K}\hat{z}-\hat{v}\rbrace\\
&+\hat{\ubar{\lambda}}^\Tr(-\psi_v\aug_{N_v}\hat{v}+\ubar{v})+\hat{\bar{\lambda}}^\Tr(\psi_v\aug_{N_v}\hat{v}-\bar{v}).
\end{split}
\end{align}
According to Definition \ref{def:truncaug}, $\aug_{N_v}\lbrace\hat{v}\rbrace\in\Sc_v$ and  $\aug_{N_z}\lbrace\hat{z}\rbrace\in\Sc_z$. Conditions PF, DF and CS are recovered using $\tilde{K}=\aug_{N_v}\lbrace \hat{K}\rbrace$ and \eqref{eq:trunc_equivalence}. The gradient of the augmented Lagrangian of the truncated problem \eqref{eq:blockcircqp_trunc} is obtained as
\begin{align}\label{eq:gradienthat}
\begin{split}
\begin{bmatrix}\frac{\hat{J}+\hat{J}^H}{2}+\rho \hat{K}^H \hat{K} & -\rho \hat{K}^H\\ -\rho \hat{K} & \rho\end{bmatrix}
\begin{pmatrix}\hat{z}^* \\ \hat{v}^*\end{pmatrix} +
\begin{bmatrix}\hat{K}^H\\ -\I\end{bmatrix}\trunc_{N_v}\hat{\gamma}^*\\
+\begin{bmatrix}0\\ \trunc_{N_v}\lbrace \psi_v^H \hat{\bar{\lambda}}^* -\psi_v^H \hat{\ubar{\lambda}}^*\rbrace\end{bmatrix}+ \begin{pmatrix}\hat{q} \\ 0\end{pmatrix}=0.
\end{split}\tag{$\hat{\text{ST}}$}
\end{align}
Augmenting both rows of \eqref{eq:gradienthat} yields \eqref{eq:gradienttilde} and completes the proof.
\end{proof}
%%%%%%%%%%%%%%%%%%%%%%%%%%%%%%
%%%%%%%%%%%%%%%%%%%%%%%%%%%%%% THM END

\subsection{ADMM for Block Circulant MPC}\label{sec:circ_admm}
The version of Algorithm \ref{alg:admm} for a CBCQP, or equivalently a block circulant MPC problem, is outlined in Algorithm \ref{alg:circ_admm} and the individual steps are presented in the following paragraphs.
 \begin{algorithm}
 \caption{ADMM for Block Circulant MPC}\label{alg:circ_admm}
 \begin{algorithmic}[1]
 \renewcommand{\algorithmicrequire}{\textbf{Input:}}
 \renewcommand{\algorithmicensure}{\textbf{Output:}}
 \REQUIRE State $x(t)$
 \ENSURE  Input $u(t)$ 
  \STATE Set $x_0^0=x(t)$ and $\hat{y}^0,\hat{\gamma}^0=0$; compute $\ubar{v},\bar{v}$ and $\hat{q}$
  \FOR {$i = 1$ to $i_{max}$}
  	\STATE Update $\hat{z}^i$ using $(\hat{\text{SP1}})$
  	\STATE Update $\hat{v}^i$ using $(\hat{\text{SP2}})$
  	\STATE Update $\trunc_{N_v}\hat{\gamma}^i$ using $(\hat{\text{SP3}})$
  	\IF {$\twonorm{\aug_{N_v}\lbrace\hat{v}^i-\hat{v}^{i-1}\rbrace}^2 < \epsilon$ and\\$\qquad\twonorm{\hat{\gamma}^i-\hat{\gamma}^{i-1}}^2 < \epsilon$}
  		\BREAK
  	\ENDIF
  \ENDFOR
 \RETURN $u(t)=\psi_u \aug\lbrace (\hat{z}_0,\dots,\hat{z}_{\frac{n-1}{2}n_u})^\Tr \rbrace$ 
 \end{algorithmic} 
 \end{algorithm}
Before entering the loop, Algorithm \ref{alg:circ_admm} requires the mapping of $q$ into the complex Fourier domain.

 Next, the algorithm solves \eqref{eq:sp1} projected onto the Fourier domain,
\begin{align}\label{eq:sp1hat}
\left( \hat{J}+\rho \hat{K}^H\hat{K}\right) \hat{z}^i = \hat{K}^H (\rho \hat{v}^{i-1}-\trunc_{N_v}\hat{\gamma}^{i-1}) -\hat{q}.\tag{S\^{P}1}
\end{align}
When subproblem \eqref{eq:sp1} is projected onto the Fourier domain as in \eqref{eq:sp1hat}, it is simplified in two ways. On one hand, the block diagonalized matrices have been reduced to a maximum of $Tn(\max\lbrace n_u,pn_y\rbrace)^2$ nonzero elements, where $T$ is the prediction horizon of the MPC problem. On the other, the pattern of complex conjugates allows vectors and matrices to be truncated as in Theorem \ref{thm:decompblockqp}.

 The modified subproblem \eqref{eq:sp2_sat} reads as
\begin{align}\label{eq:sp2hat_sat}
\hat{v}^i = \trunc_{N_v}\lbrace \psi_v^H\sat_{[\ubar{v},\bar{v}]}\left\lbrace \psi_v\aug_{N_v}\lbrace\hat{K}\hat{z}^i+\rho^{-1}\trunc_{N_v}\hat{\gamma}^{i-1}\rbrace\right\rbrace\rbrace.\nonumber\tag{S\^{P}2}
\end{align}
The discrete Fourier transformations required in subproblem \eqref{eq:sp2hat_sat} are the main drawbacks of algorithm \eqref{alg:circ_admm} and the problem sizes required to outperform algorithm \eqref{alg:admm} are discussed in section \ref{sec:complexity}.

 The decomposed and truncated dual variables are updated using
\begin{align}\label{eq:sp3hat}
\trunc_{N_v}\hat{\gamma}^i = \trunc_{N_v}\hat{\gamma}^{i-1} +\rho(\hat{K}\hat{z}^i-\hat{y}^i). \nonumber\tag{S\^{P}3}
\end{align}
As mentioned in the proof of Theorem \ref{thm:truncblockqp}, $\hat{\gamma}\in\Sc_v$ and it is therefore sufficient to update the truncated dual variable. In practice, the term $\hat{K}\hat{z}^i$ can be cached during \eqref{eq:sp2hat_sat} and reused in \eqref{eq:sp3hat}.

 Finally, the convergence of Algorithm \ref{alg:circ_admm} is verified by
\begin{align}\label{eq:convergence}
\twonorm{\aug_{N_v}\lbrace\hat{v}^i-\hat{v}^{i-1}\rbrace}^2 < \epsilon,\\
\twonorm{\hat{\gamma}^i-\hat{\gamma}^{i-1}}^2 < \epsilon.
\end{align}
To see that this criterion is equivalent to the one of Algorithm \ref{alg:admm}, note that $\twonorm{\aug_{N_v}\lbrace \hat{v}\rbrace}=\twonorm{\psi_v^Hv}=\twonorm{v}$. In practice, the augmentation is not required as we can relate $\twonorm{v}^2$ and $\twonorm{\hat{v}}^2$ by
\begin{align}
2\twonorm{\hat{v}}^2-\sum_{k=1}^{N_v}(\hat{v}^k_1)^\Tr\hat{v}^k_1 = \twonorm{v}^2,
\end{align}
where we assumed that $n$ is odd and where the sum over the segments of $\hat{v}$ extracts the first (real) block of each segment. Note that for the returned value $u(t)$ in the last step of Algorithm \ref{alg:circ_admm} it is assumed that $n$ is odd as well.

 As shown in Theorems \ref{thm:decompblockqp} and \ref{thm:truncblockqp}, Algorithms \ref{alg:admm} and \ref{alg:circ_admm} produce an equivalent solution in the sense that the minimizers $z^*$ and $\hat{z}^*$ are related by $z^*=\psi_z\aug_{N_z}\lbrace\hat{z}^*\rbrace$.

\subsection{Computational Complexity}\label{sec:complexity}
In this section, the computational complexities of Algorithm \ref{alg:admm} and \ref{alg:circ_admm} are compared. It is assumed that the matrices in Definition \ref{def:blockcircqp} are full and that $\inR{J}{(Nn_zn)}{(Nn_zn)}$ and $\inR{K}{(Nn_vn)}{(Nn_zn)}$ are partitioned into segments of identical lengths $n_z$ and $n_v$, respectively, with $n_v=n_z$. Algorithm \ref{alg:circ_admm} makes use of discrete Fourier transformations during initialization, finalization and in subproblem \eqref{eq:sp2hat_sat}. These can be performed using the identity \eqref{eq:PsiPerm} and a sequence of fast Fourier transformations (FFT). For a vector of length $r$, the complexity of the FFT is $\co{r\log r}$.

 For both algorithms, the main burden lies in solving the linear system in subproblems \eqref{eq:sp1} and \eqref{eq:sp1hat}. Depending on the structure of the linear system, it can be solved in numerous ways. For simplicity, it is assumed that the linear system is solved using a matrix inverse that is pre-computed offline. In that case, \eqref{eq:sp1} is of complexity $\co{(Nnn_z)^2}$, while the block diagonalization and truncation in \eqref{eq:sp1hat} reduces the complexity to $\co{4\frac{n}{2}(Nn_z)^2}$, where the factor $4$ accounts for the required complex arithmetic. Because the operations of the saturation function are negligible, \eqref{eq:sp2_sat} accounts for a complexity of $\co{(Nnn_z)^2+Nnn_z}$. Note that the term $Kz^i$ is reused in \eqref{eq:sp3}. As the projection onto the boundaries $[\ubar{v},\bar{v}]$ must be carried out in the original domain, the drawbacks of the Fourier transformation become evident in \eqref{eq:sp2hat_sat}. This results in a complexity of $\co{4\frac{n}{2}(Nn_z)^2+2Nn_zn\log n}$. Lastly, \eqref{eq:sp3} and \eqref{eq:sp3hat} are of complexities $\co{Nnn_z}$ and $\co{2N\frac{n}{2}n_z}$, respectively.

 The total complexities of Algorithm \ref{alg:admm} and \ref{alg:circ_admm} are summarized in Table \ref{tbl:complexity}. The totals reveal that under the assumptions of this section, Algorithm \ref{alg:circ_admm} is cheaper than \ref{alg:admm} if $n>2$ and $Nn_z>3$.

\begin{table}[!t]
\renewcommand{\arraystretch}{1.3}
\caption{Computational Complexity for Algorithms \ref{alg:admm} and \ref{alg:circ_admm}}
\label{tbl:complexity}
\centering
  \begin{tabular}{@{}r  l  l@{}}
    \hline
                   & Algorithm \ref{alg:admm} & Algorithm \ref{alg:circ_admm}\\
    \hline
    \eqref{eq:sp1} & $\co{(Nnn_z)^2}$     & $\co{2n(Nn_z)^2}$ \\
    \eqref{eq:sp2_sat} & $\co{(Nnn_z)^2}$ & $\co{2n(Nn_z)^2+2Nn_zn\log n}$ \\
    \eqref{eq:sp3} & $\co{Nnn_z}$         & $\co{Nnn_z}$ \\
    \hline
    Total          & $\co{Nnn_z(2Nnn_z+1)}$ & $\co{Nnn_z(4Nn_z+1+2\log n)}$\\
    \hline
  \end{tabular}
\end{table}

%%%%%%%%%%%%%%%%%%%%%%%%%%%%%%%%%%%%%%%%%%%%%%%%%%%%%%%%%%%%%%%
% SECTION 5: SIMULATION RANDOM QP, SIMULATION OSCILLATORS
%%%%%%%%%%%%%%%%%%%%%%%%%%%%%%%%%%%%%%%%%%%%%%%%%%%%%%%%%%%%%%%
\section{Simulations}\label{sec:simulations}

\subsection{Random Constrained QP with Block Circulant Blocks}\label{sec:random_qp}
We first consider a set of randomly generated constrained block circulant QPs to gauge performance of Algorithm \ref{alg:circ_admm}. Figure \ref{fig:decomposition} shows the decomposed left-hand side of \eqref{eq:sp1}, $J+\rho K^\Tr K$, of a QP of order $n=4$ before applying the truncation. On one hand, it is evident how the transformation matrices $\psi_z$ and $\psi_v$ block diagonalize $J+\rho K^\Tr K$. On the other, the pattern of complex conjugate blocks motivating the truncation becomes apparent.
\begin{figure}[!t]
\centering
\includegraphics[scale=1]{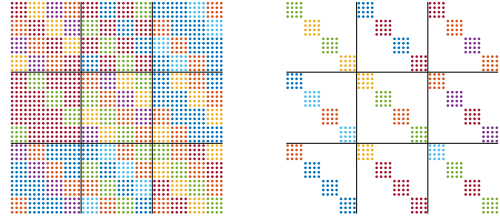}
\caption{Sparsity patterns of $J+\rho K^\Tr K$ (left) and $\tilde{J}+\rho \tilde{K}^H \tilde{K}$ (right) for $N_z=N_v=3$ and $l_z^j=l_v^j=4,\,j=1,2,3$. The colors are proportional to the magnitude of the matrix elements.}
\label{fig:decomposition}
\end{figure}

\ifx\uselargeplots\undefined
	\begin{figure}[!t]\label{fig:resultsqp}
	\centering
	\begin{overpic}[scale=1]{plots/results_qp_3_logy}
	\put(45,-3){\footnotesize{Order $n$}}
	\put(-4,48){\rotatebox{90}{\footnotesize{\eqref{eq:sp1}-\eqref{eq:sp3}}}}
	\put(-4,32){\rotatebox{90}{\footnotesize{\eqref{eq:sp1}}}}
	\put(-4,9.5){\rotatebox{90}{\footnotesize{\eqref{eq:sp2_sat}}}}
	\end{overpic}
	\caption{Logarithmically scaled execution times of Algorithms \ref{alg:admm} (continuous) and \ref{alg:circ_admm} (dashed) in milliseconds for random block circulant QPs. The first row shows the total time for \eqref{eq:sp1}-\eqref{eq:sp3}. The second and third row show the execution times for \eqref{eq:sp1} and \eqref{eq:sp2_sat}, respectively.}
	\end{figure}
\else
	\begin{figure}[!t]\label{fig:resultsqp}
	\centering
	\begin{overpic}[scale=1]{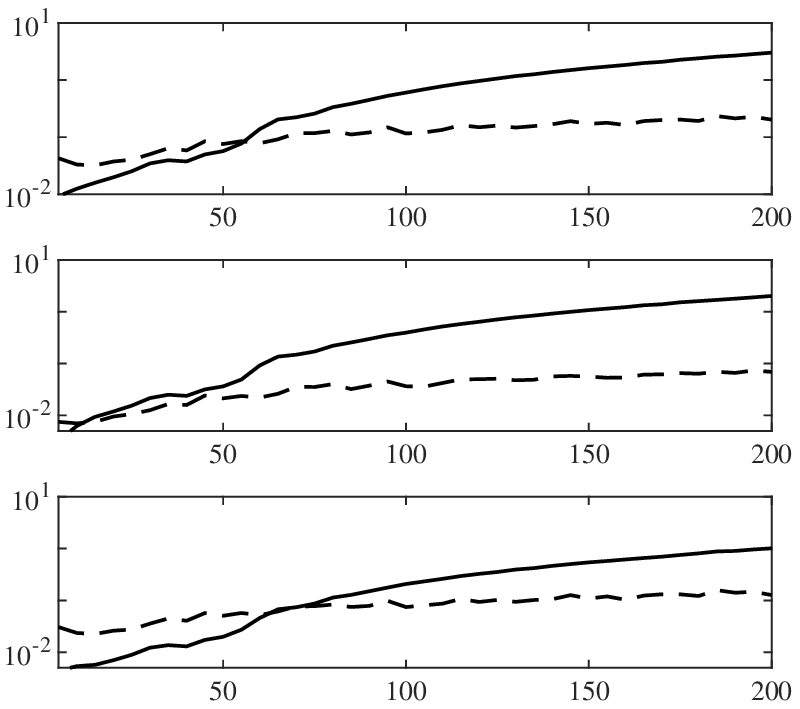}
	\put(45,-3){\footnotesize{Order $n$}}
	\put(-2.5,66.5){\rotatebox{90}{\footnotesize{\eqref{eq:sp1}-\eqref{eq:sp3}}}}
	\put(-2.5,42.5){\rotatebox{90}{\footnotesize{\eqref{eq:sp1}}}}
	\put(-2.5,13){\rotatebox{90}{\footnotesize{\eqref{eq:sp2_sat}}}}
	\end{overpic}
	\caption{Logarithmically scaled execution times of Algorithms \ref{alg:admm} (continuous) and \ref{alg:circ_admm} (dashed) in milliseconds for random block circulant QPs. The first row shows the total time for \eqref{eq:sp1}-\eqref{eq:sp3}. The second and third row show the execution times for \eqref{eq:sp1} and \eqref{eq:sp2_sat}, respectively.}
	\end{figure}
\fi

 Figure \ref{fig:resultsqp} shows the execution times of Algorithms \ref{alg:admm} and \ref{alg:circ_admm} for random block circulant QPs of increasing order $n$ with $N=1$ and $l_z=l_v=10$. Even though the analysis of Section \ref{sec:complexity} accounted for the required Fourier transformations, Figure \ref{fig:resultsqp} reveals that for small $n$ the additional operations required in \eqref{eq:sp2hat_sat} such as truncating, permuting and augmenting the vectors are not negligible. For larger $n$ these side effects lose their significance and the superiority of Algorithm \ref{alg:circ_admm} becomes evident.

\subsection{Ring of Masses}\label{sec:oscillators}
Consider $n$ identical masses arranged in a ring of radius $R$ and connected through springs and dampers. At equilibrium the masses are uniformly spaced around the ring at angles $\phi_j=\frac{2\pi}{n}$ for $j=1,\dots,n$. The dynamics of a small deviation $\Delta\phi_j$ of mass $j$ from the equilibrium angle $\phi_j$ are obtained as
\begin{align}\label{eq:oscillator}
\begin{split}
\Delta\ddot{\phi}_j = &\frac{k}{m}(\Delta\phi_{j+1}+\Delta\phi_{j-1}-2\Delta\phi_{j})\\&+ \frac{d}{m}(\Delta\dot{\phi}_{j+1}+\Delta\dot{\phi}_{j-1}-2\Delta\dot{\phi}_{j})+\frac{1}{m}T_j,
\end{split}
\end{align}
where all indices are modulo $n$, $T_j$ is a controllable torque acting on each mass, $m$ is the mass and $k$ and $d$ are the spring and damper coefficients, respectively.
Discretizing the dynamics w.r.t. time and setting $x_k=(\Delta\phi_1^k,\Delta\dot{\phi}^k_1,\dots,\Delta\phi_n^k,\Delta\dot{\phi}^k_n)^\Tr$ yields a discrete-time block circulant dynamical system. By assigning $u_k=(T_1^k,\dots,T_n^k)^\Tr$, a block circulant MPC problem of order $n$  with $n_u=1$ and $n_x=2$ can be defined. For the following experiments, matrices $Q$ and $R$ were chosen to be identity matrices, the prediction horizon $T$ was set to $10$ and the states $x_k$ and the inputs $u_k$ were constrained separately, i.e. $C_1 = \I_{nn_x}$, $D_1=0$ and $C_2=0$, $D_2=\I_{nn_u}$. According to Corollary \ref{thm:blocpmpisblockqp}, the latter MPC problem defines a CBCQP.

\ifx\uselargeplots\undefined
	\begin{figure}[!t]\label{fig:results_oscillator}
	\centering
	\begin{overpic}[scale=1]{plots/results_oscillator_4_logy}
	\put(45,-3){\footnotesize{Order $n$}}
	\put(-4,48){\rotatebox{90}{\footnotesize{\eqref{eq:sp1}-\eqref{eq:sp3}}}}
	\put(-4,32){\rotatebox{90}{\footnotesize{\eqref{eq:sp1}}}}
	\put(-4,10){\rotatebox{90}{\footnotesize{\eqref{eq:sp2_sat}}}}
	\end{overpic}
	\caption{Logarithmically scaled execution times of Algorithms \ref{alg:admm} (continuous) and \ref{alg:circ_admm} (dashed) in milliseconds for the MPC problem of Section \ref{sec:oscillators}. The first row shows the total time for \eqref{eq:sp1}-\eqref{eq:sp3}. The second and third row show the execution times for \eqref{eq:sp1} and \eqref{eq:sp2_sat}, respectively.}
	\end{figure}
\else
	\begin{figure}[!t]\label{fig:results_oscillator}
	\centering
	\begin{overpic}[scale=1]{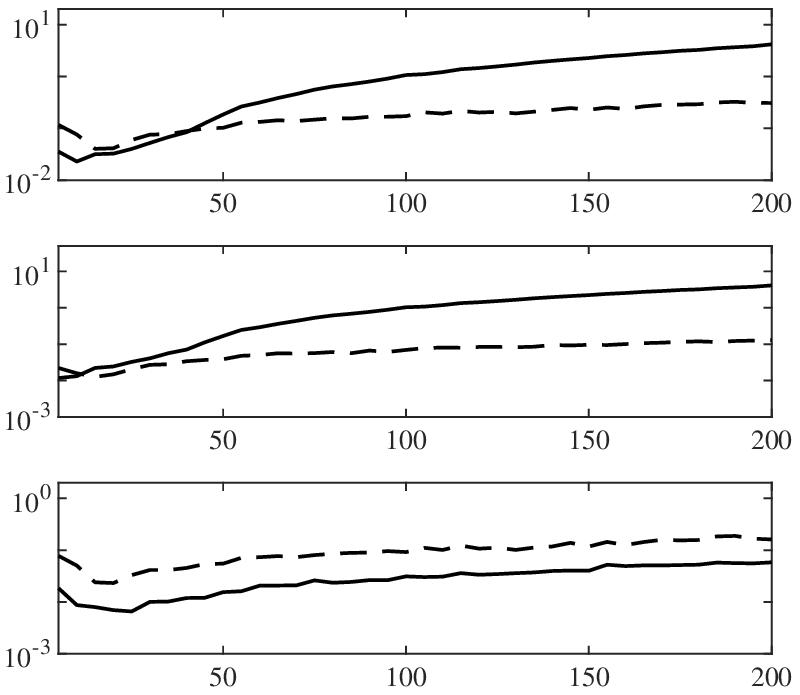}
	\put(45,-3){\footnotesize{Order $n$}}
	\put(-2.5,66.5){\rotatebox{90}{\footnotesize{\eqref{eq:sp1}-\eqref{eq:sp3}}}}
	\put(-2.5,42.5){\rotatebox{90}{\footnotesize{\eqref{eq:sp1}}}}
	\put(-2.5,13){\rotatebox{90}{\footnotesize{\eqref{eq:sp2_sat}}}}
	\end{overpic}
	\caption{Logarithmically scaled execution times of Algorithms \ref{alg:admm} (continuous) and \ref{alg:circ_admm} (dashed) in milliseconds for the MPC problem of Section \ref{sec:oscillators}. The first row shows the total time for \eqref{eq:sp1}-\eqref{eq:sp3}. The second and third row show the execution times for \eqref{eq:sp1} and \eqref{eq:sp2_sat}, respectively.}
	\end{figure}
\fi

 Figure \ref{fig:results_oscillator} compares the performance of Algorithm \ref{alg:admm} and \ref{alg:circ_admm} solving the MPC problem with random initial conditions for an increasing number of masses $n$. Each problem was solved with an average number of 25 ADMM iterations. As for the example of Section \ref{sec:random_qp}, Algorithm \ref{alg:circ_admm} performs worse than Algorithm \ref{alg:admm} for small $n$ when the drawbacks of the Fourier transformation in \eqref{eq:sp2hat_sat} and other side effects outweigh the computational gains in \eqref{eq:sp1hat}. The benefits of the Fourier transformation become evident for larger $n$.

%%%%%%%%%%%%%%%%%%%%%%%%%%%%%%%%%%%%%%%%%%%%%%%%%%%%%%%%%%%%%%%
% SECTION 6: CONCLUSIONS
%%%%%%%%%%%%%%%%%%%%%%%%%%%%%%%%%%%%%%%%%%%%%%%%%%%%%%%%%%%%%%%
\section{Conclusions}
\balance
This paper demonstrated how to exploit the particular structure of a MPC problem for block circulant systems. Based on the properties of block circulant matrices, a block circulant MPC problem was defined and connected to a general constrained QP with block circulant blocks. A transformation was derived which block diagonalizes any constrained QP with block circulant blocks and allows to truncate the transformed vectors. A modified ADMM algorithm for the transformed and truncated system was developed. The modified ADMM algorithm was tested using a series of random constrained QPs with block-circulant problem data and using an academic example of a block-circulant MPC problem. In both cases, the evaluation of the results revealed that the modified ADMM algorithm performs significantly better for increasing problem sizes.

%%%%%%%%%%%%%%%%%%%%%%%%%%%%%%%%%%%%%%%%%%%%%%%%%%%%%%%%%%%%%%%
% BIBLIOGRAPHY
%%%%%%%%%%%%%%%%%%%%%%%%%%%%%%%%%%%%%%%%%%%%%%%%%%%%%%%%%%%%%%%
\bibliographystyle{IEEEtran}
\bibliography{IEEEabbrv,bib_circ_admm}

\end{document}